\renewcommand{\leq}{\leqslant}
\renewcommand{\geq}{\geqslant}
\setlist[enumerate,1]{label=\roman*),ref=\roman*)}
\setlist[enumerate,2]{label=\alph*),ref=\roman{enumi}-\alph*)}
\setlist[enumerate,3]{label=(\Alph*),ref=(\roman{enumi}-\alph{enumii}-\Alph*)}
\setlist[enumerate,4]{label=(\arabic*),ref=(\roman{enumi}-\alph{enumii}-\Alph{enumiii}-\arabic*)}
\theoremstyle{plain}
\newtheorem{theorem}{Theorem}[section]
\newtheorem{proposition}[theorem]{Proposition}
\newtheorem{lemma}[theorem]{Lemma}
\newtheorem{corollary}[theorem]{Corollary}
\theoremstyle{definition}
\newtheorem{problem}{Problem}
\numberwithin{equation}{section}
\newcommand{\pone}{p_1}
\newcommand{\ptwo}{p_2}
\newcommand{\R}{\mathbb R}
\newcommand{\ee}{\mathrm{e}}
\newcommand{\dd}{\mathrm{d}}
\newcommand{\Ber}{\mathrm{Ber}}
\newcommand{\PoiBin}{\mathrm{PoiBin}}
\newcommand{\E}{\mathbf{E}}
\newcommand{\Prb}{\mathbf{P}}
\renewcommand{\vec}{\mathbf}
\title{The Kearns--Saul inequality for Bernoulli and Poisson-binomial distributions}%
\author{Eckhard Schlemm}%
\address{Wolfson College, University of Cambridge}%%
\curraddr{UCL Medical School, University College London}
\email{eckhard.schlemm@cantab.net}
\keywords{Bernoulli distribution \and Kearns--Saul inequality \and Laplace transform \and Poisson-binomial distribution}%
\subjclass[2010]{60E10}
\begin{document}

\begin{abstract}
We give a direct rigorous proof of the Kearns--Saul inequality which bounds the Laplace transform of a generalised Bernoulli random variable. We extend the arguments to generalised Poisson-binomial distributions and characterise the set of parameters such that an analogous inequality holds for the sum of two generalised Bernoulli random variables.
\end{abstract}

\maketitle

\section{Introduction and main results}
A generalised Bernoulli random variable $X\sim \Ber(p)$ with parameter $p\in[0,1]$ is defined by its distribution function $\Prb(X=1-p)=1-\Prb(X=-p)=p$. It differs from a classical Bernoulli random variable in that it is shifted so as to have mean zero. In \cite{kearns1998large} the Laplace transform $\E\ee^{tX}$ of a generalised  Bernoulli random variable $X\sim\Ber(p)$ was bounded by
\begin{equation}
\label{eq-KSineq}
p\ee^{t(1-p)}+(1-p)\ee^{-tp}\leq\exp\left[\frac{1-2p}{4\log[(1-p)/p]}t^2\right],\quad t\in\R,\quad 0\leq p\leq 1.
\end{equation}
A rigorous proof of this inequality was provided in \cite{berend2012concentration}, where the function
\begin{equation}
\label{gorig}
g_p(t) = \frac{1}{t^2}\log\left[p\ee^{t(1-p)}+(1-p)\ee^{-tp}\right],\quad t\in\R,
\end{equation}
was analysed using convexity arguments. In the same paper, the task of proving that the function $g_p$ is strictly unimodal with a unique maximum at $t=t_p^{\ast}= 2\log[(1-p)/p]$ was classified as an "intriguing open problem". Here, a differentiable real-valued function $f$ on $\R$ is said to be strictly unimodal if there exists an $x$ such that the derivative $f'$ is positive on $(-\infty,x)$ and negative on $(x,\infty)$. In the next section we provide a proof of the following solution to this problem.
\begin{theorem}
\label{theorem-KS-orig}
For every $p\in[0,1]$, the function $g_p$ defined in \cref{gorig} is strictly unimodal.
\end{theorem}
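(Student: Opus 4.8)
The plan is to work with the cumulant generating function
\begin{equation*}
\phi(t):=\log\bigl[p\ee^{t(1-p)}+(1-p)\ee^{-tp}\bigr]=-pt+\log\bigl(p\ee^{t}+q\bigr),\qquad q:=1-p,
\end{equation*}
so that $g_p=\phi/t^2$. First I would dispose of the trivial cases: for $p\in\{0,1\}$ one has $\phi\equiv 0$, and the reflection $X\sim\Ber(p)\iff -X\sim\Ber(1-p)$ gives $\phi_p(t)=\phi_{1-p}(-t)$, hence $g_p(t)=g_{1-p}(-t)$, so it suffices to treat $0<p\le\tfrac12$. Since $\phi$ is smooth with $\phi(0)=\phi'(0)=0$ and $\phi''(0)=pq$, the function $g_p$ extends smoothly across $t=0$, and a short computation gives
\begin{equation*}
g_p'(t)=\frac{h(t)}{t^3},\qquad h(t):=t\phi'(t)-2\phi(t).
\end{equation*}
Thus the whole problem reduces to pinning down the sign of $h$.

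The key observation is that $h$ has a remarkably simple second derivative: $h(0)=h'(0)=0$ and $h''(t)=t\phi'''(t)$, while, writing $u=u(t):=p\ee^{t}/(p\ee^{t}+q)\in(0,1)$ (strictly increasing in $t$, equal to $\tfrac12$ exactly at $t_0:=\log(q/p)\ge 0$), one finds $\phi'''(t)=u(1-u)(1-2u)$. Hence $\phi'''>0$ on $(-\infty,t_0)$ and $\phi'''<0$ on $(t_0,\infty)$, so $h''<0$ on $(-\infty,0)$, $h''>0$ on $(0,t_0)$, and $h''<0$ on $(t_0,\infty)$.

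Then I would chain sign deductions over these three intervals. On $(-\infty,0)$: $h''<0$ forces $h'$ to decrease to $h'(0)=0$, so $h'>0$, so $h$ increases to $h(0)=0$, so $h<0$. On $(0,t_0)$: $h''>0$ forces $h'$ to increase from $h'(0)=0$, so $h'>0$ and $h>0$ on $(0,t_0]$; in particular $h(t_0)>0$ and $h'(t_0)>0$. On $(t_0,\infty)$: $h''<0$ makes $h'$ strictly decreasing, and since $h'(t_0)>0$ while $h'(t)\to -q<0$ as $t\to\infty$ (because $\phi''(t)=u(1-u)\to 0$ and $\phi'(t)\to q$), $h'$ has a single zero; thus $h$ rises then falls, and combined with $h(t_0)>0$ and $h(t)\to-\infty$ it has a single zero $t_2$, positive before and negative after. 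A direct evaluation at $t=2t_0$ (where $u=q$, $\phi(2t_0)=t_0(q-p)$, $\phi'(2t_0)=q-p$) gives $h(2t_0)=0$, so $t_2=2t_0=t_p^\ast$. Collecting: $h<0$ on $(-\infty,0)$, $h>0$ on $(0,t_p^\ast)$, $h<0$ on $(t_p^\ast,\infty)$; dividing by $t^3$ and noting $g_p'(0)=pq(q-p)/6\ge 0$ shows $g_p'>0$ on $(-\infty,t_p^\ast)$ and $g_p'<0$ on $(t_p^\ast,\infty)$, which is the claim (the case $p=\tfrac12$ being identical with $t_0=t_p^\ast=0$).

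The decisive reduction — that $g_p'$ inherits its sign from $h=t\phi'-2\phi$, whose second derivative $t\phi'''$ has the transparent sign pattern above — renders most of the argument mechanical. The genuinely delicate step is the interval $(t_0,\infty)$, where $h$ is not monotone: there one must propagate the strict inequality $h'(t_0)>0$ from the adjacent interval, use the concavity of $h$ to conclude that $h'$ and then $h$ each change sign exactly once, and invoke the asymptotics $h(t)\to-\infty$; the identity $h(2t_0)=0$ then locates the maximiser at $t_p^\ast$. The symmetry reduction, the smoothness at $t=0$, and the low-order Taylor coefficients are routine.
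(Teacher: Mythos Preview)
Your proof is correct and follows essentially the same route as the paper: both analyse the sign of $h(t)=t^{3}g_p'(t)$ (the paper calls it $f_p$), compute its second derivative, locate the two inflection points at $t=0$ and $t=t_0=\log(q/p)=t_p^{\ast}/2$, and chain convexity/concavity through the resulting three intervals. Your logistic substitution $u=p\ee^{t}/(p\ee^{t}+q)$, giving $\phi''=u(1-u)$ and $\phi'''=u(1-u)(1-2u)$, packages the same calculation more transparently; and on the right tail you argue via the asymptotics $h'\to -q$, $h\to-\infty$ and then identify the zero as $2t_0$, whereas the paper instead verifies directly that $f_p'(t_p^{\ast})<0$ --- a cosmetic difference only.
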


A natural extension of generalised Bernoulli random variables is the family of Poisson-binomial distributions \cite{chen1997statistical}. For a positive integer $n$ and a parameter vector $\vec p=(\pone,\ldots,p_n)\in[0,1]^n$, the $\PoiBin_n(\vec p)$ distribution is defined as the distribution of the random variable $X_1+\ldots+X_n$, where the $X_i\sim \Ber(p_i)$ are independent generalised Bernoulli random variables. In the following we will be interested in the case $n=2$ and provide a generalisation of \cref{eq-KSineq}. The statement and proof of this generalisation, as well as \cref{cor-KSInew}, are the main results of this paper. The analogue of the function $g_p$, defined in \cref{gorig}, which occupies a central role in the proof of the Kearns--Saul inequality, is
\begin{equation}
\label{gnew}
g_{\pone,\ptwo}(t) = g_{\pone}(t)+g_{\ptwo}(t) = \frac{1}{t^2}\log\left[\ee^{-t (\pone +\ptwo)}\left(1+ \pone \left(\ee^t-1\right) \right) \left(1+\ptwo \left(\ee^t-1\right) \right)\right].
\end{equation}
By $\langle x,y\rangle$ we denote an ordered pair of real numbers $x$ and $y$.
\begin{theorem}
\label{thm-unimodalitynew}
Let $g_{\pone,\ptwo}$ be the function defined in \cref{gnew} and 
\begin{equation}
t_{\pone,\ptwo}^{\ast}=\log\left[\frac{1-\pone}{\pone}\frac{1-\ptwo}{\ptwo}\right].
\end{equation}
Then $g_{\pone,\ptwo}\left(t_{\pone,\ptwo}^{\ast}\right)=0$ and there exist sets $C\subset B\subset A\subset [0,1]^2$ such that
\begin{enumerate}
\item\label{thm-unimodalitynew1} $g_{\pone,\ptwo}'(t)$ is positive for $t<0$ if and only if $\langle\pone,\ptwo\rangle\in A$;
\item\label{thm-unimodalitynew2} $g_{\pone,\ptwo}'(t)$ is negative for $t>t_{\pone,\ptwo}^{\ast}$ if and only if $\langle\pone,\ptwo\rangle\in B$;
\item\label{thm-unimodalitynew3} $g_{\pone,\ptwo}'(t)$ is positive for $0<t<t_{\pone,\ptwo}^{\ast}$ if and only if $\langle\pone,\ptwo\rangle\in C$.
\end{enumerate}
In particular, $g_{\pone,\ptwo}$ is unimodal if and only if $\langle\pone,\ptwo\rangle\in C$.
\end{theorem}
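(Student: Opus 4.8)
The plan is to take the three conditions \ref{thm-unimodalitynew1}--\ref{thm-unimodalitynew3} as the \emph{definitions} of $A$, $B$ and $C$, so that the substance of the theorem becomes the chain $C\subseteq B\subseteq A$, the characterisation of unimodality, and the preliminary fact that $t_{\pone,\ptwo}^{\ast}$ is a critical point of $g_{\pone,\ptwo}$. I would first set $\phi_p(t)=\log\bigl(1+p(\ee^t-1)\bigr)-pt$ and $\Phi=\phi_{\pone}+\phi_{\ptwo}$, so that $t^2g_{\pone,\ptwo}(t)=\Phi(t)$ and $g_{\pone,\ptwo}'(t)=\psi(t)/t^3$ with $\psi(t):=t\Phi'(t)-2\Phi(t)$. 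From the identities $1+p_i\bigl(\ee^{t_{\pone,\ptwo}^{\ast}}-1\bigr)=(1-p_i)/p_{3-i}$, immediate from the definition of $t_{\pone,\ptwo}^{\ast}$, one gets $\phi_{p_i}'\bigl(t_{\pone,\ptwo}^{\ast}\bigr)=1-\pone-\ptwo$ for $i=1,2$, hence $\Phi'\bigl(t_{\pone,\ptwo}^{\ast}\bigr)=2(1-\pone-\ptwo)$, together with $\Phi\bigl(t_{\pone,\ptwo}^{\ast}\bigr)=(1-\pone-\ptwo)\,t_{\pone,\ptwo}^{\ast}$; therefore $\psi\bigl(t_{\pone,\ptwo}^{\ast}\bigr)=0$ and $g_{\pone,\ptwo}'\bigl(t_{\pone,\ptwo}^{\ast}\bigr)=0$. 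Two symmetries then cut down the remaining work: $g_{\pone,\ptwo}=g_{\ptwo,\pone}$ lets me assume $\pone\geq\ptwo$, and $g_{\pone,\ptwo}(t)=g_{1-\pone,1-\ptwo}(-t)$, which sends $t_{\pone,\ptwo}^{\ast}$ to $-t_{1-\pone,1-\ptwo}^{\ast}$, lets me run the analysis under $t_{\pone,\ptwo}^{\ast}\geq0$, i.e.\ $\pone+\ptwo\leq1$, the case $\pone+\ptwo>1$ being its mirror image.

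Since $g_{\pone,\ptwo}'$ has the sign of $\psi$ for $t>0$ and of $-\psi$ for $t<0$, I would then pin down the shape of $\psi$ from $\psi(0)=0$, $\psi'(0)=-\Phi'(0)=0$ and $\psi''(t)=t\Phi'''(t)$, where a short computation gives
\[
\phi_p'''(t)=\frac{p(1-p)\ee^t\bigl[(1-p)-p\ee^t\bigr]}{\bigl(1-p+p\ee^t\bigr)^3},
\]
positive for $t<\log[(1-p)/p]$ and negative afterwards; hence $\Phi'''=\phi_{\pone}'''+\phi_{\ptwo}'''$ is positive on $\bigl(-\infty,\log[(1-\pone)/\pone]\bigr)$ and negative on $\bigl(\log[(1-\ptwo)/\ptwo],\infty\bigr)$. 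Complementarily, \cref{theorem-KS-orig} shows each $g_{p_i}'$ is positive on $\bigl(-\infty,2\log[(1-p_i)/p_i]\bigr)$ and negative afterwards, so $g_{\pone,\ptwo}'=g_{\pone}'+g_{\ptwo}'$ is already positive on $\bigl(-\infty,2\log[(1-\pone)/\pone]\bigr)$ and negative on $\bigl(2\log[(1-\ptwo)/\ptwo],\infty\bigr)$; since $t_{\pone,\ptwo}^{\ast}$ is the midpoint of the two Kearns--Saul maxima, the whole question collapses to the sign of $g_{\pone,\ptwo}'$ on the bounded window $W=\bigl(2\log[(1-\pone)/\pone],\,2\log[(1-\ptwo)/\ptwo]\bigr)$, which contains $t_{\pone,\ptwo}^{\ast}$.

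On $W$ I would recover the sign of $\psi$ by integrating $\psi''=t\Phi'''$ twice, subject to $\psi(0)=\psi'(0)=\psi\bigl(t_{\pone,\ptwo}^{\ast}\bigr)=0$. As $\Phi'''$ is positive near $-\infty$ and negative near $+\infty$ it has an odd number of zeros, and Rolle's theorem (applied to $\psi$ on $\bigl[0,t_{\pone,\ptwo}^{\ast}\bigr]$, then to $\psi'$) forces $\Phi'''$ to vanish in $\bigl(0,t_{\pone,\ptwo}^{\ast}\bigr)$. When $\Phi'''$ has exactly one zero $\tau\in\bigl(0,t_{\pone,\ptwo}^{\ast}\bigr)$, tracking $\psi'$ (positive on $(-\infty,0)$; rising then falling on $(0,\infty)$ with a single further zero past $\tau$) shows $\psi<0$ on $(-\infty,0)$, $\psi>0$ on $\bigl(0,t_{\pone,\ptwo}^{\ast}\bigr)$ and $\psi<0$ on $\bigl(t_{\pone,\ptwo}^{\ast},\infty\bigr)$; dividing by $t^3$ this is exactly conditions \ref{thm-unimodalitynew1}, \ref{thm-unimodalitynew3}, \ref{thm-unimodalitynew2}, so $\langle\pone,\ptwo\rangle\in C$ and $g_{\pone,\ptwo}$ is unimodal with maximum at $t_{\pone,\ptwo}^{\ast}$. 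When $\Phi'''$ instead has three zeros (equivalently, writing $x=\ee^t$, a certain quartic has three positive roots), $\psi$ gains extra sign changes inside $W$, and the inclusions $C\subseteq B\subseteq A$ amount to the assertion that, as $\langle\pone,\ptwo\rangle$ leaves the unimodal region, condition \ref{thm-unimodalitynew3} fails before \ref{thm-unimodalitynew2}, which fails before \ref{thm-unimodalitynew1}. I expect the main obstacle to be proving this ordering --- controlling where the extra zeros of $\psi$ lie relative to $0$ and $t_{\pone,\ptwo}^{\ast}$; the delicate subcase is $\pone>\tfrac12$, where $t_{\pone,\ptwo}^{\ast}$ sits strictly inside the interval on which $\Phi'''$ changes sign, so that the simple monotonicity argument "$\psi'$ is decreasing beyond $t_{\pone,\ptwo}^{\ast}$'' available when $\pone\leq\tfrac12$ must be replaced by a quantitative estimate. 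The rest is calculus, \cref{theorem-KS-orig}, and the two symmetries.

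Finally, the ``in particular'' clause is formal: if $\langle\pone,\ptwo\rangle\in C$ then $C\subseteq B\subseteq A$ gives all of \ref{thm-unimodalitynew1}--\ref{thm-unimodalitynew3}, hence $g_{\pone,\ptwo}'>0$ on $\bigl(-\infty,t_{\pone,\ptwo}^{\ast}\bigr)$ and $<0$ on $\bigl(t_{\pone,\ptwo}^{\ast},\infty\bigr)$, so $g_{\pone,\ptwo}$ is unimodal; conversely, a unimodal $g_{\pone,\ptwo}$ has a single zero of its derivative, which must be the already-identified critical point $t_{\pone,\ptwo}^{\ast}$, whence condition \ref{thm-unimodalitynew3} holds and $\langle\pone,\ptwo\rangle\in C$.
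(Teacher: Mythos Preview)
Your framework coincides with the paper's: you introduce the same auxiliary function $\psi(t)=t^3g_{\pone,\ptwo}'(t)$ (the paper's $f_{\pone,\ptwo}$), observe $\psi(0)=\psi'(0)=\psi(t_{\pone,\ptwo}^\ast)=0$, and reduce the sign analysis of $g_{\pone,\ptwo}'$ to the convexity structure of $\psi$ via $\psi''(t)=t\Phi'''(t)$. Your use of \cref{theorem-KS-orig} to localise everything to the bounded window $W$ between the two single--variable Kearns--Saul maxima is a pleasant shortcut the paper does not make explicit. The single--zero case of $\Phi'''$ you handle correctly, and it is exactly the paper's sufficient condition $\mathcal D(\pone,\ptwo)\leq0$ of \cref{thm-sufficientunimodality}.

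The gap is precisely where you flag it: in the three--zero case you do not prove the ordering $C\subseteq B\subseteq A$, and ``must be replaced by a quantitative estimate'' is not yet an argument. The paper closes this gap not by a single estimate but by \emph{explicit computation of each boundary}. First, after clearing denominators $\Phi'''$ is governed by a quartic in $\ee^t$ that \emph{factors} as a term with root $\ee^{t_{\pone,\ptwo}^\ast/2}$ times a quadratic $\mathfrak p_{\pone,\ptwo}$; the discriminant of $\mathfrak p_{\pone,\ptwo}$ is a polynomial $\mathcal D(\pone,\ptwo)$, and the location of its roots relative to $0$ and $t_{\pone,\ptwo}^\ast$ is controlled by the sign of $\mathcal A(\pone,\ptwo)=\psi^{(3)}(0)/(1-\pone-\ptwo)$. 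This gives $\partial A=\{\mathcal A=0\}$, i.e.\ an explicit curve $\ptwo=\alpha(\pone)$. The boundary $\partial B$ comes from the explicit equation $\psi'(t_{\pone,\ptwo}^\ast)=0$, yielding a curve $\ptwo=\beta(\pone)$ (which the paper even parametrises in closed form). The inclusion $B\subset A$ is then the elementary inequality $\beta(\pone)>\alpha(\pone)$; the inclusion $C\subset B$ follows because $\psi'(t_{\pone,\ptwo}^\ast)>0$ forces $\psi<0$ immediately to the left of $t_{\pone,\ptwo}^\ast$, killing condition \ref{thm-unimodalitynew3}. For $C$ itself the paper fixes $t$ and analyses $\ptwo\mapsto f_{\pone,\ptwo}(t)$, again quadratic in $\ptwo$ up to positive factors, to locate exactly where positivity on $(0,t_{\pone,\ptwo}^\ast)$ is lost. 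Your plan points in the right direction, but what is missing is this entire layer of concrete computation: the factorisation of $\Phi'''$, the explicit forms of $\alpha$ and $\beta$, and the monotonicity--in--$\ptwo$ lemma that pins down $\gamma$. Without these, the ``ordering'' you identify as the main obstacle remains an obstacle.
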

The closed, convex sets $A$ and $B$ are given explicitly by \cref{eq-inclusionDA,eq-defB}. The boundary of the set $C$ can be determined numerically by solving the differential equation \labelcref{eq-secondordercondition}. The following result gives an explicit sufficient condition for $g_{\pone,\ptwo}$ to be unimodal.
\begin{theorem}
\label{thm-sufficientunimodality}
A sufficient condition for $g_{\pone,\ptwo}$ to be unimodal is $\langle\pone,\ptwo\rangle\in D$, where
\begin{equation}
D=\left\{\langle\pone,\ptwo\rangle\in[0,1]^2:12\pone \ptwo \left(\pone \ptwo  + \pone  + \ptwo \right) - 14\pone \ptwo  + \pone ^2+\ptwo ^2\leq 0\right\}
\end{equation}
is a closed convex subset of $C$.
\end{theorem}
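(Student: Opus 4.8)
The plan is to reduce everything to \cref{thm-unimodalitynew}. Since $g_{\pone,\ptwo}$ is unimodal exactly when $\langle\pone,\ptwo\rangle\in C$, and $C\subset B\subset A$, it suffices to prove $D\subseteq C$; by item \labelcref{thm-unimodalitynew3} this means showing $g_{\pone,\ptwo}'(t)>0$ for $0<t<t_{\pone,\ptwo}^{\ast}$ whenever $\langle\pone,\ptwo\rangle\in D$. (One first records that $D\subset\{\pone+\ptwo\le 2\sqrt2-2<1\}$, so that $t_{\pone,\ptwo}^{\ast}>0$ on the interior of $D$ and this interval is nonempty; the degenerate boundary point $\langle0,0\rangle$ is trivial.) Write $H=t^2 g_{\pone,\ptwo}=h_{\pone}+h_{\ptwo}$ and $\phi(t)=tH'(t)-2H(t)$, so that $g_{\pone,\ptwo}'(t)=\phi(t)/t^3$ for $t\ne0$, with $\phi(0)=\phi'(0)=0$ (mean zero) and $\phi(t_{\pone,\ptwo}^{\ast})=0$ by a direct computation ($t_{\pone,\ptwo}^{\ast}$ being a critical point of $g_{\pone,\ptwo}$). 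The identity $\phi''(t)=tH'''(t)$ is immediate, and with $\sigma(x)=(1+\ee^{-x})^{-1}$ and $c_p=\log[(1-p)/p]$ one has $h_p''(t)=\sigma'(t-c_p)$, hence $H'''(t)=\sigma''(t-c_{\pone})+\sigma''(t-c_{\ptwo})$; since $c_{\pone}+c_{\ptwo}=t_{\pone,\ptwo}^{\ast}$ and $\sigma''$ is odd, this gives $H'''(t_{\pone,\ptwo}^{\ast}/2)=0$.

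The intermediate step is: if $H'''\ge0$ on $[0,t_{\pone,\ptwo}^{\ast}/2]$ and $H'''\le0$ on $[t_{\pone,\ptwo}^{\ast}/2,t_{\pone,\ptwo}^{\ast}]$, then $\phi>0$ on $(0,t_{\pone,\ptwo}^{\ast})$. Indeed, on the first half $\phi''=tH'''\ge0$, so $\phi$ is convex; together with $\phi(0)=\phi'(0)=0$ this makes $\phi$ nonnegative and nondecreasing there, and in fact $\phi(t_{\pone,\ptwo}^{\ast}/2)>0$ since $H'''(0)=\sum_i p_i(1-p_i)(1-2p_i)>0$ on $D$ (which follows from the same estimates as below). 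On the second half $\phi''\le0$, so $\phi$ is concave; being positive at $t_{\pone,\ptwo}^{\ast}/2$ and zero at $t_{\pone,\ptwo}^{\ast}$, it lies above the chord joining these two points and is therefore positive on $[t_{\pone,\ptwo}^{\ast}/2,t_{\pone,\ptwo}^{\ast})$.

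It remains to show that $\langle\pone,\ptwo\rangle\in D$ forces this sign pattern for $H'''$. Substituting $s=t-t_{\pone,\ptwo}^{\ast}/2$ and $\mu=\tfrac12|c_{\pone}-c_{\ptwo}|$, the function $R(s):=H'''(s+t_{\pone,\ptwo}^{\ast}/2)=\sigma''(s+\mu)+\sigma''(s-\mu)$ is odd in $s$, so the required pattern is equivalent to $R(s)\le0$ for $0\le s\le t_{\pone,\ptwo}^{\ast}/2$. I expect this to come from two ingredients: first, $R'(0)=2\sigma'''(\mu)\le0$, which holds precisely when $\mu\le x_{+}:=\log(2+\sqrt3)$, the positive zero of $\sigma'''$; second, the elementary identity $|\sigma''(x)|=\tfrac14|\tanh(x/2)|\,(1-\tanh^2(x/2))$, which shows $|\sigma''|$ is unimodal in $|x|$ with a single interior maximum at $x_{+}$ (the value $\tanh(x_{+}/2)=1/\sqrt3$ being forced by $\cosh(2x_{+})=7$), and this monotonicity controls $|\sigma''(s+\mu)|$ against $|\sigma''(\mu-s)|$ on the relevant range and yields $R(s)\le0$ throughout. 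The hypothesis $\mu\le x_{+}$ is exactly $\cosh(c_{\pone}-c_{\ptwo})\le\cosh(2x_{+})=7$, i.e.\ $[\ptwo(1-\pone)]^2+[\pone(1-\ptwo)]^2\le14\,\pone\ptwo(1-\pone)(1-\ptwo)$, i.e.\ $\pone^2+\ptwo^2+12\pone\ptwo(\pone+\ptwo)-12\pone^2\ptwo^2-14\pone\ptwo\le0$, and this is implied by the defining inequality of $D$, which merely adds the nonnegative term $24\pone^2\ptwo^2$. The step I expect to be the genuine obstacle is establishing $R(s)\le0$ on the whole interval $[0,t_{\pone,\ptwo}^{\ast}/2]$ — not just near $s=0$ — from $\mu\le x_{+}$, i.e.\ making the $\tanh$-monotonicity bookkeeping rigorous across the cases where $s+\mu$ does or does not exceed $x_{+}$.

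Finally, $D=[0,1]^2\cap\{q\le0\}$ with $q$ the displayed polynomial, so $D$ is closed; and $D\subsetneq C$ follows once $D\subseteq C$ is known, together with the observation that the whole diagonal lies in $C$ (by \cref{theorem-KS-orig}, since $g_{p,p}=2g_p$) whereas $D$ meets it only in $\{\pone=\ptwo\le\sqrt2-1\}$. For convexity of $D$ the plan is to intersect with an arbitrary line: $q$ restricts to a quartic with positive leading coefficient, and one checks — using $\pone+\ptwo\le2\sqrt2-2$ on $D$ — that no such restriction has four distinct real roots in $[0,1]^2$, so its non-positivity set is an interval. (Equivalently: near the origin $D$ is the convex cone between the two lines of slope $7\pm4\sqrt3$, the Hessian of $q$ is positive semidefinite on the remainder of $D$, and the extra term $24\pone^2\ptwo^2$ keeps $\langle1,1\rangle$ out of $D$, thereby removing the non-convex pinch that would otherwise occur for the larger set $\{\pone^2+\ptwo^2+12\pone\ptwo(\pone+\ptwo)-12\pone^2\ptwo^2-14\pone\ptwo\le0\}$.) This part should be routine but somewhat tedious.
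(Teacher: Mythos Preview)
Your strategy is the paper's strategy in different clothing. Both arguments show that $f_{\pone,\ptwo}=t^3g_{\pone,\ptwo}'$ is convex on $(0,t_{\pone,\ptwo}^{\ast}/2)$ and concave on $(t_{\pone,\ptwo}^{\ast}/2,t_{\pone,\ptwo}^{\ast})$, and then read off $f_{\pone,\ptwo}>0$ on $(0,t_{\pone,\ptwo}^{\ast})$ from the boundary data $f(0)=f'(0)=0$, $f(t^{\ast})=0$, $f'(t^{\ast})\leq 0$. The paper obtains the convexity pattern by factoring $f''_{\pone,\ptwo}$ and extracting a quadratic $\mathfrak{p}_{\pone,\ptwo}(\ee^t)$ whose discriminant is (a positive multiple of) $\mathcal{D}(\pone,\ptwo)$; you obtain it via the elegant identity $h_p''(t)=\sigma'(t-c_p)$ and the odd function $R(s)=\sigma''(s+\mu)+\sigma''(s-\mu)$. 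Your route also makes transparent that the convexity pattern holds on the \emph{larger} set $\{\mu\leq x_+\}=\{\pone^2+\ptwo^2+12\pone\ptwo(\pone+\ptwo)-12\pone^2\ptwo^2-14\pone\ptwo\leq 0\}$, of which the stated $D$ is a strict subset; this is exactly the sharp region where the paper's quadratic has non-positive discriminant.

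There is, however, a real gap where you anticipated one. The ``unimodality of $|\sigma''|$'' argument only compares $|\sigma''(\mu-s)|$ against $|\sigma''(\mu+s)|$ when both arguments lie on the same monotone arm, i.e.\ when $\mu+s\leq x_+$. For $s\in(x_+-\mu,\mu)$ you are comparing a value on the increasing arm with one on the decreasing arm, and monotonicity alone gives nothing; the interval $[0,t_{\pone,\ptwo}^{\ast}/2]$ is not in general contained in $[0,x_+-\mu]$. The claim $\mu\leq x_+\Rightarrow R(s)\leq 0$ for all $s>0$ is nevertheless true, and your own $\tanh$ substitution finishes it cleanly: with $u=\tanh(s/2)$, $v=\tanh(\mu/2)\in[0,1)$ one computes
\[
R(s)\;=\;-\,\frac{(1-u^2)(1-v^2)}{2\,(1-u^2v^2)^{3}}\;u\Bigl[(1-3v^2)+u^2v^2(3-v^2)\Bigr],
\]
and since $3-v^2>0$ the bracket is nonnegative for all $u\geq 0$ precisely when $1-3v^2\geq 0$, i.e.\ $v\leq 1/\sqrt3$, i.e.\ $\mu\leq x_+$. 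This is the paper's discriminant computation in different coordinates. (A minor side remark: your bound $\pone+\ptwo\leq 2\sqrt2-2$ on $D$ is the diagonal value; off the diagonal one should argue separately, but all you actually need is $\pone+\ptwo<1$, and that does hold on the stated $D$.)
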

The sets $A$, $B$, $C$ and $D$ are depicted in \cref{fig-g-regions}. One can see that $B\backslash D$ is fairly small and that the inclusion $D\subset C\subset B$ thus contains a considerable amount of information about the shape of $C$. As a corollary to \cref{thm-unimodalitynew} we obtain the following result, a direct generalisation of the Kearns--Saul inequality for Poisson-binomial random variables.
\begin{figure}
\centering
\includegraphics[width=0.75\textwidth]{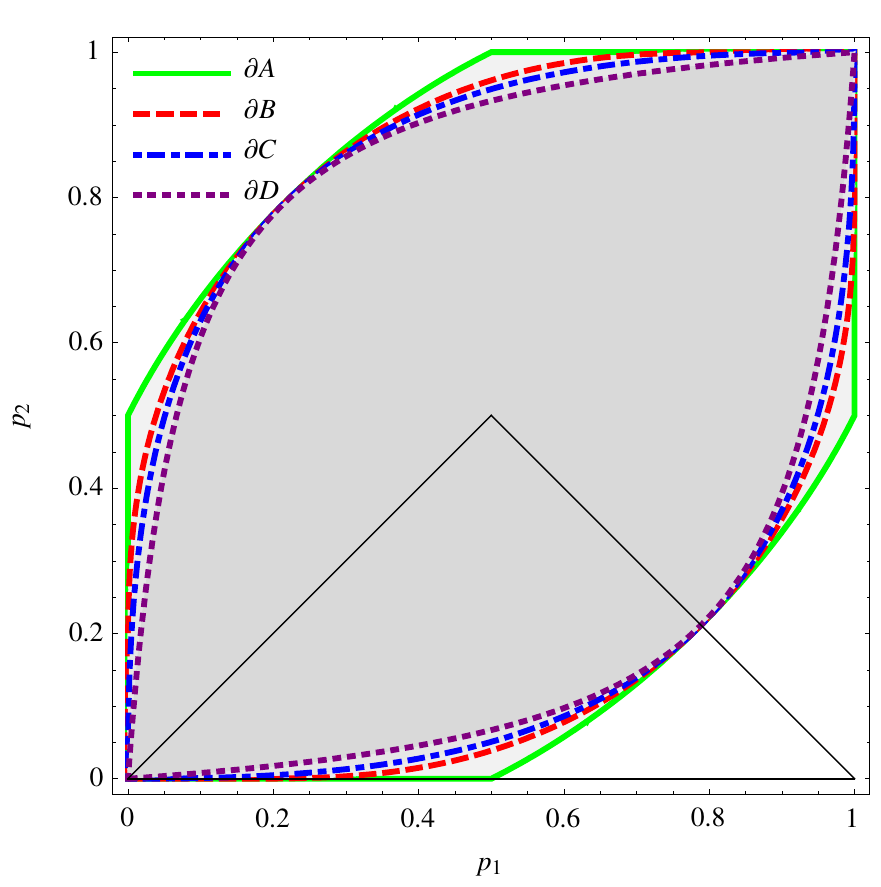}
\caption{Illustration of the sets appearing in \cref{thm-unimodalitynew} about the behaviour of the function $g_{\pone,\ptwo}$ (defined in \cref{gnew}) in different regions of the parameter space $[0,1]^2$. For $\langle\pone,\ptwo\rangle\in A$, the region with the green solid boundary, the derivative $g_{\pone,\ptwo}'$ is positive on the interval $\left(-\infty,0\right)$. For $\langle\pone,\ptwo\rangle\in B$, the region with the red dot-dashed boundary, the derivative $g_{\pone,\ptwo}'$ is negative on the interval $\left(t_{\pone,\ptwo}^{\ast},\infty\right)$. For $\langle\pone,\ptwo\rangle\in C$, the region with the blue dash-dotted boundary, the derivative $g_{\pone,\ptwo}'$ is positive on the interval $\left(0,t_{\pone,\ptwo}^{\ast}\right)$. For $\langle\pone,\ptwo\rangle\in D$, the region with the purple dotted boundary, the function $g_{\pone,\ptwo}$ has only two inflection points and is unimodal by \cref{thm-sufficientunimodality}. The triangular region $\Delta$, to which attention is restricted in the proof of \cref{thm-unimodalitynew}, is bounded by black lines.}
\label{fig-g-regions}
\end{figure}
\begin{corollary}
\label{cor-KSInew}
For every $\langle\pone,\ptwo\rangle\in C\subset[0,1]^2$, the Laplace transform of the generalised Poisson-binomial random variable $X\sim\PoiBin_2(\pone,\ptwo)$ satisfies
\begin{equation}
\label{eq-KSineq-new}
\E\ee^{tX}\leq \exp\left[\frac{1-\pone -\ptwo}{\log[(1-\pone)/\pone (1-\ptwo)/\ptwo ]}t^2\right],\quad t\in\R.
\end{equation}
\end{corollary}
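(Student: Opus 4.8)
The plan is to deduce \cref{cor-KSInew} from \cref{thm-unimodalitynew} together with one explicit evaluation of $g_{\pone,\ptwo}$. First I would observe that, by independence of $X_1$ and $X_2$, the moment generating function factors as $\E\ee^{tX}=\E\ee^{tX_1}\,\E\ee^{tX_2}=\ee^{-t(\pone+\ptwo)}\bigl(1+\pone(\ee^t-1)\bigr)\bigl(1+\ptwo(\ee^t-1)\bigr)$, which is exactly the argument of the logarithm in \cref{gnew}. Hence $\log\E\ee^{tX}=t^2 g_{\pone,\ptwo}(t)$ for $t\neq0$, so that, after dividing by $t^2>0$, the inequality \cref{eq-KSineq-new} is equivalent to the assertion that $g_{\pone,\ptwo}(t)\leq(1-\pone-\ptwo)/t_{\pone,\ptwo}^{\ast}$ for all $t\in\R$ (at $t=0$ both sides of \cref{eq-KSineq-new} equal $1$, and $g_{\pone,\ptwo}$ extends continuously --- indeed real-analytically --- across its removable singularity there).

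Next I would extract the required bound on $g_{\pone,\ptwo}$ from \cref{thm-unimodalitynew}. By the symmetry $g_{1-\pone,1-\ptwo}(t)=g_{\pone,\ptwo}(-t)$ --- reflecting a $\PoiBin_2$ random variable negates it and replaces each parameter by its complement --- which maps the set $C$ to itself, sends $t_{\pone,\ptwo}^{\ast}$ to $-t_{\pone,\ptwo}^{\ast}$, and leaves \cref{eq-KSineq-new} invariant, I may assume $t_{\pone,\ptwo}^{\ast}\geq0$, equivalently $\pone+\ptwo\leq1$ (the line $\pone+\ptwo=1$, on which the right-hand side of \cref{eq-KSineq-new} is to be read by continuity, being a degenerate limiting case). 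Now $\langle\pone,\ptwo\rangle\in C\subset B\subset A$, so parts \ref{thm-unimodalitynew1} and \ref{thm-unimodalitynew3} of \cref{thm-unimodalitynew} give $g_{\pone,\ptwo}'>0$ on $(-\infty,0)$ and on $(0,t_{\pone,\ptwo}^{\ast})$, while part \ref{thm-unimodalitynew2} gives $g_{\pone,\ptwo}'<0$ on $(t_{\pone,\ptwo}^{\ast},\infty)$. Since $g_{\pone,\ptwo}$ is continuous on $\R$, it is therefore non-decreasing on $(-\infty,t_{\pone,\ptwo}^{\ast}]$ and non-increasing on $[t_{\pone,\ptwo}^{\ast},\infty)$, and so $g_{\pone,\ptwo}(t)\leq g_{\pone,\ptwo}(t_{\pone,\ptwo}^{\ast})$ for every $t\in\R$.

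It remains to evaluate $g_{\pone,\ptwo}$ at $t_{\pone,\ptwo}^{\ast}$. This is a short computation: writing $\ee^{t_{\pone,\ptwo}^{\ast}}=(1-\pone)(1-\ptwo)/(\pone\ptwo)$ one gets $\ee^{t_{\pone,\ptwo}^{\ast}}-1=(1-\pone-\ptwo)/(\pone\ptwo)$, hence $1+\pone(\ee^{t_{\pone,\ptwo}^{\ast}}-1)=(1-\pone)/\ptwo$ and $1+\ptwo(\ee^{t_{\pone,\ptwo}^{\ast}}-1)=(1-\ptwo)/\pone$, whose product equals $\ee^{t_{\pone,\ptwo}^{\ast}}$ again, so the bracket in \cref{gnew} collapses to $\ee^{-t_{\pone,\ptwo}^{\ast}(\pone+\ptwo)}\ee^{t_{\pone,\ptwo}^{\ast}}=\ee^{t_{\pone,\ptwo}^{\ast}(1-\pone-\ptwo)}$; thus $g_{\pone,\ptwo}(t_{\pone,\ptwo}^{\ast})=(1-\pone-\ptwo)/t_{\pone,\ptwo}^{\ast}$, which is exactly the constant in \cref{eq-KSineq-new}. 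Substituting back, $\log\E\ee^{tX}=t^2 g_{\pone,\ptwo}(t)\leq t^2 g_{\pone,\ptwo}(t_{\pone,\ptwo}^{\ast})$ for all $t$, which is \cref{eq-KSineq-new}. I do not expect any genuine difficulty here, since \cref{thm-unimodalitynew} does the work; the only points requiring a little care are the continuity of $g_{\pone,\ptwo}$ at $t=0$, needed to glue the two monotonicity statements across the origin, and the initial symmetry reduction, which aligns the orientation of the intervals appearing in \cref{thm-unimodalitynew} with the sign of $1-\pone-\ptwo$.
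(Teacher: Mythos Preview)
Your proof is correct and follows the same approach as the paper: identify $\log\E\ee^{tX}$ with $t^2 g_{\pone,\ptwo}(t)$, use \cref{thm-unimodalitynew} to locate the global maximum of $g_{\pone,\ptwo}$ at $t_{\pone,\ptwo}^{\ast}$, and evaluate $g_{\pone,\ptwo}$ there. The paper simply invokes the ``in particular'' clause of \cref{thm-unimodalitynew} (unimodality of $g_{\pone,\ptwo}$ for $\langle\pone,\ptwo\rangle\in C$) to get the maximum directly, whereas you unpack parts \ref{thm-unimodalitynew1}--\ref{thm-unimodalitynew3} and perform the symmetry reduction to $\pone+\ptwo\leq 1$ yourself; your extra care with the continuity at $t=0$ and the explicit evaluation of $g_{\pone,\ptwo}\bigl(t_{\pone,\ptwo}^{\ast}\bigr)$ is welcome but not a different idea.
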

\begin{proof}
The distribution function of a $\PoiBin_2(\pone,\ptwo)$-distributed random variable $X$ is given by
\begin{align*}
\Prb(X=-\pone -\ptwo)=&(1-\pone)(1-\ptwo),\\
\Prb(X=1-\pone -\ptwo)=&\pone (1-\ptwo)+(1-\pone)\ptwo,\\
\Prb(X=2-\pone -\ptwo)=&\pone \ptwo,
\end{align*}
and its Laplace transform therefore takes the form
\begin{align*}
\E\ee^{tX} =& (1-\pone)(1-\ptwo)\ee^{-t(\pone +\ptwo)} + (\pone +\ptwo -2\pone \ptwo)\ee^{t(1-\pone -\ptwo)} + \pone \ptwo \ee^{t(2-\pone -\ptwo)}\\
    =&\ee^{-t(\pone +\ptwo)} \left[1+\pone \left(e^t-1\right)\right] \left[1+\ptwo \left(e^t-1\right)\right]\\
    =& \exp\left[t^2 g_{\pone,\ptwo}(t)\right].
\end{align*}
\Cref{thm-unimodalitynew} implies that, for $\langle\pone,\ptwo\rangle\in C$, the function $g_{\pone,\ptwo}$ has a unique maximum at $t=t_{\pone,\ptwo}^{\ast}$, and therefore satisfies
\begin{equation}
g_{\pone,\ptwo}(t)\leq g_{\pone,\ptwo}\left(t_{\pone,\ptwo}^{\ast}\right) = \frac{1-\pone -\ptwo}{\log[(1-\pone)/\pone (1-\ptwo)/\ptwo ]},\quad t\in\R.
\end{equation}
The claim follows.
\end{proof}
We conclude this section with a some remarks and open problems. We have seen that the sets $A$, $B$ and $D$ are convex and from \cref{fig-g-regions} it appears that the same true for the set $C$.
\begin{problem}
Show that the set $C$ appearing in the statement of \cref{thm-unimodalitynew} is convex.
\end{problem}
Based on numerical computations, we conjecture that inequality \labelcref{eq-KSineq-new} does, in fact, hold for a parameter region $C'$ slightly larger than $C$, even though $g_{\pone,\ptwo}$ would not be unimodal for $\langle\pone,\ptwo\rangle\in C'\backslash C$.
\begin{problem}
Characterise the set of parameters $\langle\pone,\ptwo\rangle$ such that inequality \labelcref{eq-KSineq-new} holds.
\end{problem}
An extension of our results to $\PoiBin_n$-distributions with $n$ larger than two appears to be very difficult. One reason is that, as $n$ increases, the number of critical points of the analogue of the function $g_{\pone,\ptwo}$ increases and we do not know, in general, how to find the abscissa of the critical point corresponding to the global maximum.

\section{Proof for generalised Bernoulli random variables}
In this section we prove \cref{theorem-KS-orig} about the unimodality of the function $g_p$, defined in \cref{gorig}. In the following, by unimodality, concavity and convexity, we always mean strict unimodality, strict concavity and strict convexity. The derivative of the function $g_p$ is given by
\begin{align*}
g_p'(t)=\frac{(1-p)pt\left(\ee^t-1\right) - 2\left(1+p\left(\ee^t-1\right)\right) \log\left[\ee^{-p t}\left(1+p\left(\ee^t-1\right)\right)\right]}{\left(1+p\left(\ee^t-1\right)\right) t^3},
\end{align*}
and $g_p'(t)$ vanishes for $t=t_p^{\ast}=2\log[(1-p)/p]$. In order to prove unimodality we will, without loss of generality, assume that $p$ is less than $1/2$ and $t_p^{\ast}$ thus positive. If $p>1/2$, one may consider instead the random variable $-X$, which is $\Ber(1-p)$-distributed. The boundary case $p=1/2$ is best dealt with separately: in this case the function $g_{1/2}$ is symmetrical with a maximum at $t=0$, and is easily seen to be unimodal. We first record the following easy properties of the function $f_p:t\mapsto t^3 g_p'(t)$ for later reference.
\begin{lemma}
\label{lemma-inflectionpoints-old}
For every $p<1/2$, the function $f_p$ has exactly two inflection points; their  abscissas are $t=0$ and $t=t_p^{\ast}/2=\log[(1-p)/p]$. The function $f_p$ is concave on $\R^-\cup \left(t_p^{\ast}/2,\infty\right)$ and convex on $\left(0,t_p^{\ast}/2\right)$.
\end{lemma}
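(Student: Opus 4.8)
The plan is to reduce the claim to a sign analysis of the third derivative of a single logarithm. Write $L_p(t)=\log\bigl[p\ee^{t(1-p)}+(1-p)\ee^{-tp}\bigr]$, so that $g_p(t)=L_p(t)/t^2$. Since $L_p(0)=0$ and $L_p'(0)=0$, the function $g_p$ — and hence $f_p(t)=t^3g_p'(t)$ — extends smoothly across $t=0$, and a direct computation gives the ``telescoping'' identities
\[
f_p(t)=tL_p'(t)-2L_p(t),\qquad f_p'(t)=tL_p''(t)-L_p'(t),\qquad f_p''(t)=tL_p'''(t).
\]
Thus the inflection points of $f_p$ are $t=0$ together with the zeros of $L_p'''$, and the concavity behaviour of $f_p$ is dictated entirely by the sign of $tL_p'''(t)$.

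It remains to compute $L_p'''$. Using the factorisation $p\ee^{t(1-p)}+(1-p)\ee^{-tp}=\ee^{-pt}u(t)$ with $u(t)=1+p(\ee^t-1)=1-p+p\ee^t$, one has $L_p(t)=-pt+\log u(t)$, and — since every derivative of $u$ equals $u'(t)=p\ee^t$ and $u(t)-u'(t)=1-p$ — differentiation yields
\[
L_p''(t)=\frac{(1-p)\,u'(t)}{u(t)^2},\qquad L_p'''(t)=\frac{(1-p)\,u'(t)\,\bigl(u(t)-2u'(t)\bigr)}{u(t)^3},
\]
so that $f_p''(t)=(1-p)\,t\,u'(t)\,\bigl(u(t)-2u'(t)\bigr)/u(t)^3$. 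For $0<p<1/2$ (the interesting case, the values $p\in\{0\}$ being degenerate and $p\geq 1/2$ having been reduced away) we have $1-p>0$, $u(t)>0$ and $u'(t)>0$ throughout, so the sign of $f_p''(t)$ is that of $t\bigl(u(t)-2u'(t)\bigr)$, where $u(t)-2u'(t)=1-p-p\ee^t$ is positive for $t<\log[(1-p)/p]=t_p^{\ast}/2$, vanishes at $t=t_p^{\ast}/2$, and is negative for larger $t$. Reading off signs: $f_p''$ is negative on $\R^-$, positive on $(0,t_p^{\ast}/2)$ and negative on $(t_p^{\ast}/2,\infty)$; it changes sign at precisely $t=0$ and $t=t_p^{\ast}/2$, which are therefore the only two inflection points, and the stated concavity/convexity pattern follows.

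There is no real obstacle here: the whole argument rests on spotting the identity $f_p''=tL_p'''$, after which the only computation is differentiating $\log u$ twice, which is painless because $u''=u'$. The single point that needs a word of justification is the removable singularity of $g_p$ at the origin, which is disposed of by the vanishing of $L_p$ and $L_p'$ there.
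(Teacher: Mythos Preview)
Your proof is correct and essentially matches the paper's: both compute $f_p''$ explicitly, obtain the same factorisation $f_p''(t)=p(1-p)\,t\,\ee^t(1-p-p\ee^t)/u(t)^3$, and read off the sign structure on the three intervals. Your telescoping identity $f_p''=tL_p'''$ streamlines the derivation, and your direct sign analysis replaces the paper's check that $f_p^{(3)}$ is nonzero at the two zeros of $f_p''$, but these are minor presentational differences rather than a different route.
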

\begin{proof}
To prove the lemma, we first compute the second derivative of $f_p$ which equals
\begin{equation*}
 f_p''(t) = \frac{p(1-p)t\ee^t(1-p-p\ee^t)}{\left(1+p\left(\ee^t-1\right)\right)^3}.
\end{equation*}
This expression vanishes exactly for $t=0$ and $t=t_p^{\ast}/2$. To conclude that these are indeed the abscissas of reflection points we need to verify that the third derivative of $f_p$ does not vanish there. We find that $f_p^{(3)}(0)=p(1-p)(1-2p)$ and $f_p^{(3)}\left(t_p^{\ast}/2\right) = \log[p/(1-p)]/8$, which are manifestly positive and negative, respectively, for $p<1/2$. 
\end{proof}

We can now give a proof of our first main result.
\begin{proof}[Proof of \cref{theorem-KS-orig}]
We will show that $g_p'(t)$ is positive for $t<t_p^{\ast}$ and negative for $t>t_p^{\ast}$ by analysing the sign of $f_p(t)=t^3g_p'(t)$ in the three intervals $\left(-\infty,0\right)$, $\left(0,t_p^{\ast}\right)$ and $\left(t_p^{\ast},\infty\right)$. We first show that $f_p(t)<0$ for $t<0$. Since $f_p$ itself as well as its first derivative vanish at $t=0$, the claim follows from the concavity of $f_p$ on $\R^-$. We next show that $f_p(t)<0$ for $t>t_p^{\ast}$. Since $f_p\left(t_p^{\ast}\right)=0$ it suffices by the concavity of $f_p$ on $\left(t_p^{\ast},\infty\right)$ to show that
\begin{equation*}
\mathfrak{r}(p)\coloneqq f_p'\left(t_p^{\ast}\right)=2p - 1 + 2(1-p) p \log[(1-p)/p]
\end{equation*}
is negative for $p<1/2$. This follows from the observation that $\mathfrak{r}$ is monotonely increasing with derivative $\mathfrak{r}'(p)=2(1-2p)\log[(1-p)/p]>0$  and the fact that $\mathfrak{r}(1/2)=0$. Lastly, we verify that $f_p(t)>0$ for $0<t<t_p^{\ast}$. This is a direct consequence of the facts that $f_p(0)=f_p'(0)=f_p''(0)=0$, $f_p^{(3)}(0)>0$, $f_p\left(t_p^{\ast}\right)=0$, $f_p'\left(t_p^{\ast}\right)<0$, and that $f_p$ has exactly one inflection point in the interval $\left(0,t_p^{\ast}\right)$.
\end{proof}

\section{Proof in the Poisson-Binomial case}
In this section we extend the previous arguments to the case of Poisson-binomial random variables. The derivative of the function $g_{\pone,\ptwo}$ defined in \cref{gnew} is given by
\begin{align*}
g_{\pone,\ptwo}'(t)=& g_{\pone}'(t) + g_{\ptwo}'(t)\\
  =&\frac{1}{t^3}\left\{t\left[2+\pone +\ptwo -\frac{1-\pone}{1-\pone \left(1-\ee^t\right) }-\frac{1-\ptwo}{1- \ptwo \left(1-\ee^t\right)}\right]\right.\\
    &\left.\qquad-2\log\left[\left(1-\pone \left(1-\ee^t\right) \right)\left(1- \ptwo \left(1-\ee^t\right)\right)\right]\right\},
\end{align*}
and this expression vanishes for $t=t_{\pone,\ptwo}^{\ast}=\log((1-\pone)/\pone (1-\ptwo)/\ptwo)$. In order to prove unimodality we will, without loss of generality, assume that $\pone +\ptwo$ is less than or equal to one and $t^{\ast}$ thus non-negative, and that $\pone\geq\ptwo$. This can always be guaranteed by considering $-X_i$ instead of $X_i$ and/or renaming the variables $X_1$, $X_2$. We thus concentrate on the triangular region $\left\{\langle\pone,\ptwo\rangle:\ptwo\leq (\pone,1-\pone)^-\right\}$ in the parameter space $[0,1]^2$. Here and in the following, $(x,y)^-$ denotes the minimum of two real numbers $x$ and $y$. As before, the boundary cases $\ptwo=(\pone,1-\pone)^-$ are dealt with first. Instead of $g_{\pone,\ptwo}$ we will often analyse the function $f_{\pone,\ptwo}: t\mapsto t^3 g_{\pone,\ptwo}'(t)$ which is better behaved at $t=0$. We also introduce the notation $p^\pm=(3\pm\sqrt 3)/6$.
\begin{lemma}
\label{lemma-boundarycase}
The function $f_{\pone,(\pone,1-\pone)^-}$ has the following properties.
\begin{enumerate}
\item\label{lemma-boundarycase-1}If $\pone<1/2$, then $f_{\pone,\pone}$ has roots at $t=0$ and $t=t_{\pone,\pone}^*$, and is negative on $\left(-\infty,0\right)\cup\left(t_{\pone,\pone}^*,\infty\right)$ and positive on $\left(0,t_{\pone,\pone}^*\right)$.
\item\label{lemma-boundarycase-2}If $\pone\in\left[1/2,p^+\right]$, then $f_{\pone,1-\pone}$ is symmetrical, has a root at $t=0$, and is negative on $\R\backslash\{0\}$.
\item\label{lemma-boundarycase-3}If $\pone>p^+$, then $f_{\pone,1-\pone}$ is symmetrical and has a root at $t=0$. Moreover, there exists $t_{\pone}^\dagger>0$ such that $f_{\pone,1-\pone}\left(\pm t_{\pone}^\dagger\right)$ vanishes and $f_{\pone,1-\pone}$ is negative on $\left(-\infty,-t_{\pone}^\dagger\right)\cup\left(t_{\pone}^\dagger,\infty\right)$ and positive on $\left(-t_{\pone}^\dagger,t_{\pone}^\dagger\right)\backslash\{0\}$.
\end{enumerate}
In particular, the function $g_{\pone,(\pone,1-\pone)^-}$ is strictly unimodal if and only if $\pone\leq p^+$.
\end{lemma}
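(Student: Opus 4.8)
The plan is to handle the three parts in turn and then deduce the unimodality claim.

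Part \labelcref{lemma-boundarycase-1} is essentially a restatement of \cref{theorem-KS-orig}. When $\ptwo=\pone$ one has $g_{\pone,\pone}=2g_{\pone}$, hence $f_{\pone,\pone}=2t^3g_{\pone}'=2f_{\pone}$, and $t_{\pone,\pone}^{\ast}=\log[(1-\pone)^2/\pone^2]=2\log[(1-\pone)/\pone]=t_{\pone}^{\ast}$. By \cref{theorem-KS-orig}, $g_{\pone}$ is strictly unimodal, and since its derivative vanishes at $t_{\pone}^{\ast}$ the mode is $t_{\pone}^{\ast}$; thus $g_{\pone}'>0$ on $(-\infty,t_{\pone}^{\ast})$ and $g_{\pone}'<0$ on $(t_{\pone}^{\ast},\infty)$, which on multiplication by $t^3$ gives the stated signs of $f_{\pone}$, hence of $f_{\pone,\pone}$.

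For parts \labelcref{lemma-boundarycase-2} and \labelcref{lemma-boundarycase-3} we have $\ptwo=1-\pone$ and $t_{\pone,1-\pone}^{\ast}=0$. First I would establish the symmetry: substituting $t\mapsto -t$ in the argument of the logarithm in \cref{gnew}, and using $1+\pone(\ee^{-t}-1)=\ee^{-t}\bigl(1+(1-\pone)(\ee^{t}-1)\bigr)$ together with the same identity with $\pone$ replaced by $1-\pone$, shows that this argument is unchanged; hence $t^2g_{\pone,1-\pone}(t)$ is even, so $g_{\pone,1-\pone}$ is even, $f_{\pone,1-\pone}$ is even, and $f_{\pone,1-\pone}(0)=f_{\pone,1-\pone}'(0)=0$. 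It therefore suffices to pin down the sign of $f_{\pone,1-\pone}$ on $(0,\infty)$. For this I would differentiate twice: using the formula for $f_p''$ from the proof of \cref{lemma-inflectionpoints-old}, the function $f_{\pone,1-\pone}''=f_{\pone}''+f_{1-\pone}''$ equals $\pone(1-\pone)t\ee^t$ times $\phi(\ee^t)$ divided by a manifestly positive denominator, where $\phi$ is a quartic. Since $f_{\pone,1-\pone}''$ is even, $\phi$ satisfies $u^4\phi(1/u)=-\phi(u)$, which forces $\phi(u)=(u-1)(u+1)q(u)$ with $q(u)=c_4u^2+c_3u+c_4$ palindromic and $c_4=-\pone(1-\pone)\bigl(\pone^2+(1-\pone)^2\bigr)<0$. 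Consequently, for $t>0$ the sign of $f_{\pone,1-\pone}''(t)$ equals the sign of $q(\ee^t)$, so $f_{\pone,1-\pone}$ has at most one inflection point on $(0,\infty)$.

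The two cases are now separated by the discriminant of $q$, which (with $m=\pone(1-\pone)\in(0,1/4]$) equals $(1-2m-8m^2)(1-6m)$; the first factor is positive on $(0,1/4)$ and vanishes at $m=1/4$, so the sign is that of $1-6m$, and $1-6m=0$ is equivalent to $6\pone^2-6\pone+1=0$, whose roots are $p^{\pm}$. For $\pone\in[1/2,p^+]$ we have $m\geq 1/6$, so $q$ has no root in $(1,\infty)$ (it is either root-free, or has a double root, at $u=1$ when $\pone=p^+$ and at $u=-1$ when $\pone=1/2$); since $c_4<0$ this gives $q<0$, hence $f_{\pone,1-\pone}''<0$, on $(0,\infty)$, and together with $f_{\pone,1-\pone}(0)=f_{\pone,1-\pone}'(0)=0$ it follows that $f_{\pone,1-\pone}<0$ on $(0,\infty)$, which proves part \labelcref{lemma-boundarycase-2} by symmetry. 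For $\pone>p^+$ the quadratic $q$ has two reciprocal roots $u_0>1>1/u_0$ (both positive, as their product is $1$ and their sum $-c_3/c_4$ is positive), so $\phi>0$ on $(1,u_0)$ and $\phi<0$ on $(u_0,\infty)$; thus $f_{\pone,1-\pone}$ is strictly convex on $(0,\log u_0)$ and strictly concave on $(\log u_0,\infty)$, and is therefore positive on $(0,\log u_0]$. Since $f_{\pone,1-\pone}(t)\to-\infty$ as $t\to\infty$ — immediate from the expression for $g_{\pone,\ptwo}'$ given above, with $\pone+\ptwo=1$ — strict concavity on $(\log u_0,\infty)$ then forces exactly one zero $t_{\pone}^\dagger>\log u_0$, with $f_{\pone,1-\pone}>0$ on $(0,t_{\pone}^\dagger)$ and $f_{\pone,1-\pone}<0$ on $(t_{\pone}^\dagger,\infty)$; symmetry gives part \labelcref{lemma-boundarycase-3}. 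Finally, translating the sign of $f_{\pone,(\pone,1-\pone)^-}$ into the sign of $g_{\pone,(\pone,1-\pone)^-}'(t)=f_{\pone,(\pone,1-\pone)^-}(t)/t^3$, and using $g_{\pone,\pone}'(0)=\pone(1-\pone)(1-2\pone)/3>0$ for $\pone<1/2$ (the limit of $f_{\pone,\pone}(t)/t^3$ at $0$, read off from $f_{\pone}^{(3)}(0)$) and $g_{\pone,1-\pone}'(0)=0$ by evenness, one finds that $g_{\pone,(\pone,1-\pone)^-}'$ is positive then negative, with the sign change at $t_{\pone,(\pone,1-\pone)^-}^{\ast}$, when $\pone\leq p^+$, whereas its sign pattern is $+,-,+,-$ when $\pone>p^+$; hence $g_{\pone,(\pone,1-\pone)^-}$ is strictly unimodal precisely when $\pone\leq p^+$.

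The \emph{main obstacle} is the convexity analysis of $f_{\pone,1-\pone}$ in parts \labelcref{lemma-boundarycase-2}--\labelcref{lemma-boundarycase-3}: one must carry out the (routine but not short) differentiation of $f_{\pone,1-\pone}$ twice, recognise the reciprocal anti-symmetry $u^4\phi(1/u)=-\phi(u)$ of the resulting numerator that collapses the quartic to $(u^2-1)$ times a palindromic quadratic — this is precisely what caps the number of inflection points of $f_{\pone,1-\pone}$ at one — and then handle with care the degenerate parameters $\pone=1/2$ and $\pone=p^+$, where the discriminant of $q$ vanishes. Everything downstream (counting the zeros of $f_{\pone,1-\pone}$ from its sign at $0$, its single inflection point, and its limit at $+\infty$) is then a straightforward exercise.
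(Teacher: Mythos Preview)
Your proof is correct and follows essentially the same route as the paper's: reduce part~\labelcref{lemma-boundarycase-1} to \cref{theorem-KS-orig}, and for parts~\labelcref{lemma-boundarycase-2}--\labelcref{lemma-boundarycase-3} analyse $f_{\pone,1-\pone}''$, show that (after stripping positive factors and the factor $t(\ee^{2t}-1)$) it is governed by a palindromic quadratic in $\ee^t$ whose discriminant changes sign at $\pone=p^+$, and then read off the sign of $f_{\pone,1-\pone}$ from its convexity structure. Two tactical choices differ from the paper and are worth noting. First, you derive the factor $(u^2-1)$ in the quartic numerator from the anti-palindromic relation $u^4\phi(1/u)=-\phi(u)$ forced by the evenness of $f_{\pone,1-\pone}''$; the paper instead writes down the factored form of $f_{\pone,1-\pone}''$ directly. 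Second, for part~\labelcref{lemma-boundarycase-3} the paper checks the sign of $f_{\pone,1-\pone}^{(3)}$ at the inflection points (details omitted there), whereas you combine convexity on $(0,\log u_0)$, concavity on $(\log u_0,\infty)$, the boundary data $f(0)=f'(0)=0$, and the limit $f_{\pone,1-\pone}(t)\to-\infty$ to locate the single positive root $t_{\pone}^\dagger$; your argument is self-contained and avoids the omitted verification.
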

\begin{proof}
The first statement follows from \cref{theorem-KS-orig} and the observations that $g_{p,p}=2g_p$ and $t_{p,p}^{\ast}=t_p^{\ast}$. For assertion \labelcref{lemma-boundarycase-2} we observe that $f_{\pone,1-\pone}$ is manifestly symmetrical and that its second derivative is, up to positive factors, a polynomial of degree two, namely
\begin{equation*}
f_{\pone,1-\pone}''(t)=\frac{\pone(1-\pone)t\ee^t\left(\ee^{2t}-1\right)}{\left[1+\pone\left(\ee^t-1\right)\right]^3\left[1+(1-\pone)\left(\ee^t-1\right)\right]^3}\mathfrak{p}_{\pone}\left(\ee^t\right),
\end{equation*}
where $\mathfrak{p}_{\pone}(x)=-\pone(1-\pone)\left[1-2\pone(1-\pone)\right](1+x^2)+\left[1-4\pone(1-2\pone^2+\pone^3)\right]x$. Since the discriminant of this polynomial is equal to $(1-2\pone)^2[1+2\pone(1-\pone)][1-6\pone(1-\pone)]$, it does not have any real roots if $1/2\leq\pone<p^+$. A quick computation shows that $f_{\pone,1-\pone}$ itself as well as its first three derivatives vanish at $t=0$, and that $f_{\pone,1-\pone}^{(4)}=4\pone(1-\pone)[1-6\pone(1-\pone)]$; this implies the claim for $\pone$ strictly less than $p^+$. For $\pone=p^+$ one checks that the only root of $\mathfrak{p}_{\pone}$ is equal to zero, which does not correspond to a zero of $\mathfrak{p}_{p^+}\left(\ee^t\right)$. Moreover, the first-non-vanishing derivative of $f_{p^+,1-p+}$ at zero (the sixth!) is equal to $-8/9$, proving the claim. We now turn to the proof of \labelcref{lemma-boundarycase-3}. If $\pone$ exceeds $p^+$, then the fourth derivative of $f_{\pone,1-\pone}$ at $t=0$ is positive, and the two roots $x_{\pone}^\pm$ of $\mathfrak{p}_{\pone}$ correspond to the abscissas $t_{\pone}^\pm=\log x_{\pone}^\pm$ of the only two inflection points of $f_{\pone,1-\pone}$. To complete the proof it only remains to check that $f_{\pone,1-\pone}^{(3)}\left(t_{\pone}^\pm\right)$ is positive and negative, respectively; the lengthy details are omitted.
\end{proof}

We now return to the general case and denote by $\Delta$ the triangle $\left\{\langle\pone,\ptwo\rangle\in[0,1]^2:\ptwo < (\pone,1-\pone)^-\right\}$. As in the generalised Bernoulli case, a large part of the proof of \cref{thm-unimodalitynew} hinges on the convexity properties of the function $f_{\pone,\ptwo}$. For the statement of the next result we introduce the notation
\begin{equation}
\label{eq-discrimant}
\mathcal{D}(\pone,\ptwo)=12\pone \ptwo \left(\pone \ptwo  + \pone  + \ptwo \right) - 14\pone \ptwo  + \pone ^2+\ptwo ^2,
\end{equation}
and
\begin{equation}
\label{eq-condA}
\mathcal{A}(\pone,\ptwo)=2\pone\ptwo-\pone(2\pone-1)-\ptwo(2\ptwo-1),
\end{equation}
which will also be used later on. We will also use the fact that the inclusion
\begin{equation}
\label{eq-inclusionDA}
D\subset A,\quad D=\left\{\mathcal{D}(\pone,\ptwo)\leq0\right\},\quad A=\left\{\mathcal{A}(\pone,\ptwo)\geq0\right\},
\end{equation}
holds, which follows from simple algebra.
\begin{lemma}
\label{lemma-inflectionpoints}
For every $\langle\pone,\ptwo\rangle\in\Delta$, the function $f_{\pone,\ptwo}$ has two inflection points with abscissas $t=0$ and $t=t_{\pone,\ptwo}^{\ast}/2$.
\begin{enumerate}
\item\label{lemma-inflectionpoints-1} If $\mathcal{D}(\pone,\ptwo)$ is negative or zero, then these are the only inflection points and $f_{\pone,\ptwo}$ is concave on $\left(-\infty,0\right)\cup\left(t_{\pone,\ptwo}^{\ast}/2,+\infty\right)$ and convex on $\left(0,t_{\pone,\ptwo}^{\ast}/2\right)$.
\item\label{lemma-inflectionpoints-2} If $\mathcal{D}(\pone,\ptwo)$ is positive and $\mathcal{A}(\pone,\ptwo)$ is zero, then the point $\left(t_{\pone,\ptwo}^*,0\right)$ is an additional inflection point of $f_{\pone,\ptwo}$ and there are no others.
\item\label{lemma-inflectionpoints-3} if $\mathcal{D}(\pone,\ptwo)$ is positive and $\mathcal{A}(\pone,\ptwo)$ is non-zero, then there exist two additional inflection points with abscissas $t_{\pone,\ptwo}^{\pm}$.
\begin{enumerate}
\item\label{lemma-inflectionpoints-3a} if $\mathcal{A}(\pone,\ptwo)$ is negative, then $t_{\pone,\ptwo}^-< 0$ and $t_{\pone,\ptwo}^+> t_{\pone,\ptwo}^{\ast}$, and $f_{\pone,\ptwo}$ is concave on $\left(-\infty,t_{\pone,\ptwo}^-\right)\cup\left(0,t_{\pone,\ptwo}^{\ast}/2\right)\cup\left(t_{\pone,\ptwo}^+,+\infty\right)$ and convex on $\left(t_{\pone,\ptwo}^-,0\right)\cup\left(t_{\pone,\ptwo}^{\ast}/2,t_{\pone,\ptwo}^+\right)$.
\item\label{lemma-inflectionpoints-3b} if $\mathcal{A}(\pone,\ptwo)$ is positive, then $0<t_{\pone,\ptwo}^-<t_{\pone,\ptwo}^{\ast}/2$ and $t_{\pone,\ptwo}^{\ast}/2<t_{\pone,\ptwo}^+<t_{\pone,\ptwo}^{\ast}$, and $f_{\pone,\ptwo}$ is concave on $\left(-\infty,0\right)\cup\left(t_{\pone,\ptwo}^-,t_{\pone,\ptwo}^{\ast}/2\right)\cup\left(t_{\pone,\ptwo}^+,+\infty\right)$ and convex on $\left(0,t_{\pone,\ptwo}^-\right)\cup\left(t_{\pone,\ptwo}^{\ast}/2,t_{\pone,\ptwo}^+\right)$.
\end{enumerate}
\end{enumerate}
\end{lemma}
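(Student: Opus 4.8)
The plan is to compute $f_{\pone,\ptwo}''$ in closed form and to reduce the entire case analysis to the location of the roots of a single quadratic. Since $g_{\pone,\ptwo}=g_{\pone}+g_{\ptwo}$ entails $f_{\pone,\ptwo}=f_{\pone}+f_{\ptwo}$, the expression for $f_p''$ found in the proof of \cref{lemma-inflectionpoints-old} may be applied summand by summand. Writing $x=\ee^{t}$, $q_i=1-p_i$ and $a_i=q_i/p_i$, this gives
\begin{equation*}
f_{\pone,\ptwo}''(t)=-t\,\ee^{t}\left(\frac{a_1(\ee^{t}-a_1)}{(a_1+\ee^{t})^{3}}+\frac{a_2(\ee^{t}-a_2)}{(a_2+\ee^{t})^{3}}\right)=\frac{-t\,\ee^{t}\,P(\ee^{t})}{(a_1+\ee^{t})^{3}(a_2+\ee^{t})^{3}},
\end{equation*}
where $P$ is a polynomial of degree four in $x$ with positive leading coefficient. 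The prefactor and $\ee^{t}$ are positive, so the abscissas of the inflection points of $f_{\pone,\ptwo}$ are $t=0$ together with the logarithms of the positive real roots of $P$, at least where the corresponding zero of $f_{\pone,\ptwo}''$ is of odd order; that the relevant zeros are simple — and triple at $t=t_{\pone,\ptwo}^{\ast}/2$ on the curve $\set{\mathcal{D}=0}$ — hence sign-changing, follows from a local Taylor expansion, exactly as in \cref{lemma-inflectionpoints-old}.

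The crux is that $P$ factors. That the root $x_0\coloneqq\sqrt{a_1a_2}=\ee^{t_{\pone,\ptwo}^{\ast}/2}$ is always present can be read off from the involution $x\mapsto a_1a_2/x$: it sends each of the two summands in parentheses above to a negative multiple of the other, so their sum $S(x)$ obeys $S(a_1a_2/x)=-(x^{2}/a_1a_2)\,S(x)$, and evaluating at the fixed point $x_0$ forces $S(x_0)=-S(x_0)$, i.e.\ $P(x_0)=0$. Carrying out the (finite but somewhat laborious) algebra one finds
\begin{equation*}
P(x)=\kappa\,\bigl(\pone\ptwo\,x^{2}-q_1q_2\bigr)\,\mathfrak{q}(x),\qquad\kappa>0,\qquad\mathfrak{q}(x)=\pone\ptwo E\,x^{2}-Bx+Eq_1q_2,
\end{equation*}
with $E=\pone q_2+\ptwo q_1>0$ and $B=E^{2}-8\pone\ptwo q_1q_2$. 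The first factor supplies the two ``universal'' inflection points $t=0$ and $t=t_{\pone,\ptwo}^{\ast}/2$ and, being negative for $t<t_{\pone,\ptwo}^{\ast}/2$ and positive for $t>t_{\pone,\ptwo}^{\ast}/2$, sets the basic sign pattern. Everything else is controlled by $\mathfrak{q}$, which has positive leading and constant coefficients and whose two roots multiply to $x_0^{2}$: a short computation shows that its discriminant equals $(\pone q_2-\ptwo q_1)^{2}$ times a quantity proportional to $\mathcal{D}(\pone,\ptwo)$, so $\mathfrak{q}$ has two real roots when $\mathcal{D}(\pone,\ptwo)>0$, a double root — necessarily at $x_0$ — when $\mathcal{D}(\pone,\ptwo)=0$, and none when $\mathcal{D}(\pone,\ptwo)<0$; in the first case both roots are positive (the middle coefficient $-B$ being then negative), and since their product is $x_0^{2}$ their logarithms are placed symmetrically about $t_{\pone,\ptwo}^{\ast}/2$. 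Finally $\mathfrak{q}(1)=\mathcal{A}(\pone,\ptwo)$, so $x=1$, i.e.\ $t=0$, lies strictly between the roots of $\mathfrak{q}$ when $\mathcal{A}(\pone,\ptwo)<0$ and strictly outside them when $\mathcal{A}(\pone,\ptwo)>0$; combined with the symmetry about $t_{\pone,\ptwo}^{\ast}/2$ and the inequality $x_0^{2}=a_1a_2>1$, valid on $\Delta$, this yields the orderings $t_{\pone,\ptwo}^{-}<0<t_{\pone,\ptwo}^{\ast}/2<t_{\pone,\ptwo}^{\ast}<t_{\pone,\ptwo}^{+}$ when $\mathcal{A}(\pone,\ptwo)<0$ and $0<t_{\pone,\ptwo}^{-}<t_{\pone,\ptwo}^{\ast}/2<t_{\pone,\ptwo}^{+}<t_{\pone,\ptwo}^{\ast}$ when $\mathcal{A}(\pone,\ptwo)>0$.

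With the positions and orders of all zeros of $f_{\pone,\ptwo}''$ known, the concavity and convexity statements follow by reading off the sign of $f_{\pone,\ptwo}''$ from the displayed formula on each of the at most five open intervals cut out by $t_{\pone,\ptwo}^{-}$, $0$, $t_{\pone,\ptwo}^{\ast}/2$ and $t_{\pone,\ptwo}^{+}$, using that $\pone\ptwo x^{2}-q_1q_2$ changes sign at $x_0$ and that $\mathfrak{q}$ is negative exactly between its roots. The degenerate case $\mathcal{A}(\pone,\ptwo)=0$, where $\mathfrak{q}$ has roots $1$ and $a_1a_2=\ee^{t_{\pone,\ptwo}^{\ast}}$ and only the extra inflection point at $t=t_{\pone,\ptwo}^{\ast}$ survives, is dealt with by a local expansion at $t=0$ and $t=t_{\pone,\ptwo}^{\ast}$, just as for the boundary cases in \cref{lemma-boundarycase}. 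I expect the main obstacles to be the factorization of $P$ and the careful book-keeping of the positions of the two roots of $\mathfrak{q}$ relative to $1$, $x_0$ and $a_1a_2$; the rest is routine sign-chasing.
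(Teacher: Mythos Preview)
Your argument is correct and follows essentially the same route as the paper: factor $f_{\pone,\ptwo}''$ as sign-obvious terms times a quadratic in $\ee^{t}$ (your $\mathfrak{q}$ is precisely $-\mathfrak{p}_{\pone,\ptwo}$ in the paper's notation), identify its discriminant with a positive multiple of $\mathcal{D}(\pone,\ptwo)$, and locate the extra roots relative to $1$ and $a_1a_2$ through $\mathcal{A}(\pone,\ptwo)$. The involution argument for the root at $x_0=\sqrt{a_1a_2}$ and the clean identity $\mathfrak{q}(1)=\mathcal{A}(\pone,\ptwo)$ are pleasant additions that replace some of the paper's unexplained algebra, but the structure of the proof is the same.
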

\begin{proof}
Direct calculation shows that the second derivative of $f_{\pone,\ptwo}$ vanishes for $t=0$ and $t=t_{\pone,\ptwo}^{\ast}/2$ and that $f_{\pone,\ptwo}''(t)$ itself can be written as
\begin{equation*}
f_{\pone,\ptwo}''(t)=\frac{t \ee^t \left[\pone +\ptwo -1+\pone \ptwo \left(\ee^{2t}-1\right)\right]}{\left[1+\pone \left(\ee^t -1\right)\right]^3\left[1+\ptwo \left(\ee^t - 1\right)\right]^3}\mathfrak{p}_{\pone,\ptwo}\left(\ee^t\right),
\end{equation*}
where
\begin{align*}
\mathfrak{p}_{\pone,\ptwo}(x)=&(1-\pone)(1-\ptwo)(2\pone\ptwo-\pone-\ptwo)\\
  &+ \left[4\pone\ptwo(\pone+\ptwo-\pone\ptwo)-6\pone\ptwo+\pone^2+\ptwo^2\right]x\\
  &+ \pone\ptwo\left[2\pone\ptwo-\pone-\ptwo\right]x^2
\end{align*}
is a polynomial of degree two. For the existence of at least one additional inflection point it is thus necessary that the discriminant $\mathcal{D}(\pone,\ptwo)$ of $\mathfrak{p}_{\pone,\ptwo}$, which is given by \cref{eq-discrimant}, is non-negative. If the discriminant is zero, however, the only real root of the polynomial $\mathfrak{p}_{\pone,\ptwo}$ equals $\exp\left\{t_{\pone,\ptwo}^*/2\right\}$, which we we have found before, and so no additional inflection point exists in this case; this proves part \labelcref{lemma-inflectionpoints-1}.

If $\mathcal{D}(\pone,\ptwo)$ is positive, the roots of $\mathfrak{p}_{\pone\ptwo}$ are given by
\begin{equation*}
x_{\pone,\ptwo}^{\pm} = \frac{4\pone \ptwo \left(\pone +\ptwo -\pone \ptwo \right) -6\pone \ptwo  +\pone ^2+\ptwo ^2 \pm (\pone -\ptwo)\sqrt{\mathcal{D}(\pone,\ptwo)}}{2\pone \ptwo \left(\pone +\ptwo -2\pone \ptwo \right)}.
\end{equation*}
It is not difficult to check that $x_{\pone,\ptwo}^-$, and hence $x_{\pone,\ptwo}^+>x_{\pone,\ptwo}^-$, are never negative; the roots $x_{\pone,\ptwo}^\pm$ thus correspond to roots $t_{\pone,\ptwo}^{\pm}=\log x_{\pone,\ptwo}^{\pm}$ of $\mathfrak{p}_{\pone,\ptwo}\left(\ee^t\right)$ and thus to potential inflection points of $f_{\pone,\ptwo}$. To obtain a complete picture of the convexity properties of $f_{\pone,\ptwo}$ we will next analyse where the inflection points with abscissas $t_{\pone,\ptwo}^{\pm}$ are located relative to $t=0$ and $t=t_{\pone,\ptwo}^{\ast}/2$. After some more algebra we obtain that $t_{\pone,\ptwo}^-$ and $t_{\pone,\ptwo}^+$ lie to either side of $t_{\pone,\ptwo}^{\ast}/2$. We further find that $\mathcal{A}(\pone,\ptwo)<0$ is equivalent to $t_{\pone,\ptwo}^-$ being negative and $t_{\pone,\ptwo}^+$ exceeding $t_{\pone,\ptwo}^{\ast}$ (proving \labelcref{lemma-inflectionpoints-3a}), that $\mathcal{A}(\pone,\ptwo)>0$ is equivalent to $t_{\pone,\ptwo}^-$ being positive and $t_{\pone,\ptwo}^+$ deceeding $t_{\pone,\ptwo}^{\ast}$ (proving \labelcref{lemma-inflectionpoints-3b}), and finally that $\mathcal{A}(\pone,\ptwo)=0$ implies $t_{\pone,\ptwo}^-=0$ and $t_{\pone,\ptwo}^+=t_{\pone,\ptwo}^{\ast}$ (proving \labelcref{lemma-inflectionpoints-2}).

To complete the proof it remains to check the sign of the third derivative of $f_{\pone,\ptwo}$ at the (maximal) four inflection points; details of these straightforward computations are again omitted.
\end{proof}
Typical graphs of $t\mapsto f_{\pone,\ptwo}(t)$ for different values of $\langle\pone,\ptwo\rangle$, illustrating the convexity properties obtained in \cref{lemma-inflectionpoints}, are depicted in \cref{fig-seq}. In particular, part \labelcref{lemma-inflectionpoints-1} is illustrated in \cref{fig-seq-d,fig-seq-inf}; part \labelcref{lemma-inflectionpoints-2} in \cref{fig-seq-a}; part \labelcref{lemma-inflectionpoints-3a} in \cref{fig-seq-0}; and part \labelcref{lemma-inflectionpoints-3b} in \cref{fig-seq-ab,fig-seq-b,fig-seq-bc,fig-seq-c,fig-seq-cd}.

As in the generalised Bernoulli case we will establish necessary and sufficient conditions for $g_{\pone,\ptwo}$ to be unimodal by analysing the sign of the function $f_{\pone,\ptwo}$ on the three intervals $\left(-\infty,0\right)$, $\left(0,t_{\pone,\ptwo}^{\ast}\right)$ and $\left(t_{\pone,\ptwo}^{\ast},\infty\right)$, which is done in \cref{prop-step1new,prop-step2new,prop-step3new} below.
\begin{proposition}
\label{prop-step1new}
For every $\langle\pone,\ptwo\rangle\in\Delta$ the following are equivalent:
\begin{enumerate}
 \item\label{prop-step1new-1} the derivative of the function $g_{\pone,\ptwo}$ is positive on $\R^-$;
 \item\label{prop-step1new-2} the third derivative of $f_{\pone,\ptwo}$ is non-negative at $t=0$;
 \item\label{prop-step1new-3} $\pone<p^+$ and $\ptwo\geq\alpha(\pone)$, where $\alpha:[0,p^+]\to[0,p^-]$ is defined by
\begin{equation}
\label{eq-derivzero}
\alpha(\pone)=\max{\left\{0,\frac{1}{4}\left[1+2\pone -\sqrt{1+12p(1-p)}\right]\right\}}.
\end{equation}
\end{enumerate}
\end{proposition}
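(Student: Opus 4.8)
The plan is to show that the three conditions are equivalent by proving \labelcref{prop-step1new-1}$\Leftrightarrow$\labelcref{prop-step1new-2}$\Leftrightarrow$\labelcref{prop-step1new-3}, where the first equivalence rests on the convexity information in \cref{lemma-inflectionpoints} and the second amounts to solving a polynomial inequality explicitly. I would start from the identity $f_{\pone,\ptwo}=f_{\pone}+f_{\ptwo}$, which follows from $g_{\pone,\ptwo}=g_{\pone}+g_{\ptwo}$, together with the fact (established as in the proof of \cref{lemma-inflectionpoints-old}) that each $f_p$ vanishes to order three at the origin with $f_p^{(3)}(0)=p(1-p)(1-2p)$. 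Then $f_{\pone,\ptwo}(0)=f_{\pone,\ptwo}'(0)=f_{\pone,\ptwo}''(0)=0$ and
\begin{align*}
f_{\pone,\ptwo}^{(3)}(0)&=\pone(1-\pone)(1-2\pone)+\ptwo(1-\ptwo)(1-2\ptwo)\\
&=\left(1-\pone-\ptwo\right)\mathcal{A}(\pone,\ptwo),
\end{align*}
the last step being an elementary factorisation. Since $\pone+\ptwo<1$ on $\Delta$, this shows that \labelcref{prop-step1new-2} is equivalent to $\mathcal{A}(\pone,\ptwo)\geq0$, that is, to $\langle\pone,\ptwo\rangle\in A$.

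For \labelcref{prop-step1new-2}$\Leftrightarrow$\labelcref{prop-step1new-1} I would use \cref{lemma-inflectionpoints}. If $\mathcal{A}(\pone,\ptwo)\geq0$, then, since $D\subset A$ by \cref{eq-inclusionDA}, we are not in case \labelcref{lemma-inflectionpoints-3a} but in case \labelcref{lemma-inflectionpoints-1}, \labelcref{lemma-inflectionpoints-2} or \labelcref{lemma-inflectionpoints-3b}; in each of these $f_{\pone,\ptwo}$ is strictly concave on $\left(-\infty,0\right)$. (In the borderline case \labelcref{lemma-inflectionpoints-2}, which \cref{lemma-inflectionpoints} does not record, this is read off from the displayed expression for $f_{\pone,\ptwo}''$: for $t<0$ the sign of $f_{\pone,\ptwo}''(t)$ equals that of $\mathfrak{p}_{\pone,\ptwo}(\ee^t)$, which is negative because both roots of the downward-opening parabola $\mathfrak{p}_{\pone,\ptwo}$ are at least $1$.) Combined with $f_{\pone,\ptwo}(0)=f_{\pone,\ptwo}'(0)=0$, strict concavity forces $f_{\pone,\ptwo}(t)<0$ for all $t<0$, hence $g_{\pone,\ptwo}'(t)=f_{\pone,\ptwo}(t)/t^3>0$ there, which is \labelcref{prop-step1new-1}. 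Conversely, if $\mathcal{A}(\pone,\ptwo)<0$, then $\langle\pone,\ptwo\rangle\notin D$, so $\mathcal{D}(\pone,\ptwo)>0$ and we are in case \labelcref{lemma-inflectionpoints-3a}, where $f_{\pone,\ptwo}$ is strictly convex on $\left(t_{\pone,\ptwo}^-,0\right)$ with $t_{\pone,\ptwo}^-<0$; the same two vanishing conditions then yield $f_{\pone,\ptwo}>0$, hence $g_{\pone,\ptwo}'<0$, on $\left(t_{\pone,\ptwo}^-,0\right)$, so \labelcref{prop-step1new-1} fails.

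It remains to identify $A\cap\Delta$ in order to obtain \labelcref{prop-step1new-2}$\Leftrightarrow$\labelcref{prop-step1new-3}. Viewing $\mathcal{A}(\pone,\ptwo)=-2\ptwo^2+(1+2\pone)\ptwo-\pone(2\pone-1)$ as a downward-opening quadratic in $\ptwo$ with positive discriminant $1+12\pone(1-\pone)$, one has $\mathcal{A}(\pone,\ptwo)\geq0$ if and only if $r^-(\pone)\leq\ptwo\leq r^+(\pone)$ with $r^{\pm}(\pone)=\tfrac14\bigl(1+2\pone\pm\sqrt{1+12\pone(1-\pone)}\,\bigr)$, while $r^-(\pone)\leq0$ holds exactly for $\pone\leq\tfrac12$, so that $\alpha(\pone)=\max\{0,r^-(\pone)\}$. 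Evaluating $\mathcal{A}$ along the three edges of $\Delta$ gives $\mathcal{A}(\pone,\pone)=2\pone(1-\pone)>0$, $\mathcal{A}(\pone,0)=\pone(1-2\pone)$ and $\mathcal{A}(\pone,1-\pone)=-(6\pone^2-6\pone+1)$, the last of which is positive precisely for $p^-<\pone<p^+$. A short case distinction then finishes the proof: for $\pone<\tfrac12$, concavity of $\ptwo\mapsto\mathcal{A}(\pone,\ptwo)$ and its positivity at $\ptwo=0$ and $\ptwo=\pone$ give $\mathcal{A}>0$ on the whole slice $0\leq\ptwo<\pone$, matching $\alpha(\pone)=0$ and $\pone<p^+$; for $\tfrac12\leq\pone<p^+$ one has $r^-(\pone)<1-\pone<r^+(\pone)$, so on the slice $0\leq\ptwo<1-\pone$ the constraint $\ptwo\leq r^+(\pone)$ is automatic and $\mathcal{A}(\pone,\ptwo)\geq0$ reduces to $\ptwo\geq r^-(\pone)=\alpha(\pone)$; and for $\pone\geq p^+$ one has $1-\pone\leq r^-(\pone)$, so $\mathcal{A}<0$ on the whole slice $0\leq\ptwo<1-\pone$, consistent with the exclusion $\pone<p^+$ in \labelcref{prop-step1new-3}.

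The hardest part should be the last step: each ingredient is an elementary polynomial inequality, but one has to treat the transitions at $\pone=\tfrac12$ and $\pone=p^{\pm}$ with some care and verify that on $\Delta$ the upper bound $\ptwo\leq r^+(\pone)$ is never active. A lesser nuisance is that \cref{lemma-inflectionpoints} does not state the convexity behaviour of $f_{\pone,\ptwo}$ in the degenerate case \labelcref{lemma-inflectionpoints-2}, which therefore has to be supplied directly from the formula for $f_{\pone,\ptwo}''$.
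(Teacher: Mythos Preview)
Your proof is correct and follows essentially the same route as the paper: both compute $f_{\pone,\ptwo}^{(3)}(0)=(1-\pone-\ptwo)\,\mathcal{A}(\pone,\ptwo)$ and then use the concavity information from \cref{lemma-inflectionpoints} on $(-\infty,0)$ together with $f_{\pone,\ptwo}(0)=f_{\pone,\ptwo}'(0)=0$. The one notable difference is in the direction \labelcref{prop-step1new-1}$\Rightarrow$\labelcref{prop-step1new-2}: the paper simply observes that if $f_{\pone,\ptwo}$ vanishes to order three at $0$ and is negative on $(-\infty,0)$, then its third derivative at $0$ cannot be negative---a one-line Taylor argument that avoids any appeal to \cref{lemma-inflectionpoints} or to $D\subset A$. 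Your contrapositive via case~\labelcref{lemma-inflectionpoints-3a} is also valid (and correctly uses $D\subset A$ to force $\mathcal{D}>0$ once $\mathcal{A}<0$), but it is heavier machinery than needed; note also that the reference to $D\subset A$ in your forward direction \labelcref{prop-step1new-2}$\Rightarrow$\labelcref{prop-step1new-1} is superfluous, since $\mathcal{A}\geq0$ already rules out case~\labelcref{lemma-inflectionpoints-3a} directly. On the plus side, you supply two details the paper leaves implicit: the explicit algebra translating $\mathcal{A}(\pone,\ptwo)\geq0$ into the condition $\pone<p^+$, $\ptwo\geq\alpha(\pone)$ on $\Delta$, and the concavity of $f_{\pone,\ptwo}$ on $(-\infty,0)$ in the degenerate case~\labelcref{lemma-inflectionpoints-2}.
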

\begin{proof}
In order to show that \labelcref{prop-step1new-1} implies \labelcref{prop-step1new-2}, we observe that, for $t<0$, $g_{\pone,\ptwo}'(t)$ being positive is equivalent to $f_{\pone,\ptwo}(t)$ being negative. Since all three of $f_{\pone,\ptwo}(0)$, $f_{\pone,\ptwo}'(0)$ and $f_{\pone,\ptwo}''(0)$ are equal to zero, this implies that $f_{\pone,\ptwo}^{(3)}(0)$ is non-negative. The equivalence between \labelcref{prop-step1new-2} and \labelcref{prop-step1new-3} follows from the fact that
\begin{equation*}
f_{\pone,\ptwo}^{(3)}(0) = \pone(1-\pone)(1-2\pone)+\ptwo(1-\ptwo)(1-2\ptwo) = (1-\pone-\ptwo)\mathcal{A}(\pone,\ptwo)
\end{equation*}
and an easy computation. Finally, for $\langle\pone,\ptwo\rangle$ such that $\ptwo\geq\alpha(\pone)$ and $\mathcal{A}(\pone,\ptwo)$ thus non-negative, \Cref{lemma-inflectionpoints} shows that the function $f_{\pone,\ptwo}$ is concave on $\R^-$. In conjunction with the fact that $f_{\pone,\ptwo}(0)=f_{\pone,\ptwo}'(0)=0$, implies that $f_{\pone,\ptwo}(t)$ is negative for $t<0$, and thus proves \labelcref{prop-step1new-1}.
\end{proof}
The condition $\ptwo\geq\alpha(\pone)$, together with analogous inequalities for $\pone +\ptwo>1$ and $\pone<\ptwo$, is an alternative characterisation of the set $A$ from \cref{eq-inclusionDA} that features in the statement of \cref{thm-unimodalitynew}. Since the function $\alpha$ is convex and $\alpha'\left(p^+\right)$ is equal to one, the set $A$ is convex.

The next result gives a necessary and sufficient condition for the derivative of $g_{\pone,\ptwo}$ to be negative on the interval $\left(t_{\pone,\ptwo}^{\ast},\infty\right)$.
\begin{proposition}
\label{prop-step2new}
For every $\langle\pone,\ptwo\rangle\in\Delta$, the following are equivalent:
% $0<\pone<\ptwo<1$ satisfying $\pone +\ptwo<1$
\begin{enumerate}
\item\label{prop-step2new-1}the derivative of the function $g_{\pone,\ptwo}$ is negative for $t>t_{\pone,\ptwo}^{\ast}$;
\item\label{prop-step2new-2}the first derivative of $f_{\pone,\ptwo}$ is non-positive at $t=t_{\pone,\ptwo}^{\ast}$;
\item\label{prop-step2new-3}$\pone<p^+$ and $\ptwo\geq\beta(\pone)$, where $\alpha(\pone)<\beta(\pone)<(\pone,1-\pone)^-$ is the unique solution to
\begin{equation}
\label{eq-derivtstar}
\log\left[\frac{1-\pone}{\pone}\frac{1-\beta(\pone)}{\beta(\pone)}\right]=\frac{2 (1-\pone -\beta(\pone))}{\pone (1-\pone)+\beta(\pone)(1-\beta(\pone))}.
\end{equation}
\end{enumerate}
\end{proposition}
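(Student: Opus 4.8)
The plan is to reproduce the three-step structure of the proof of \cref{prop-step1new}, working throughout with $f_{\pone,\ptwo}=t^3g_{\pone,\ptwo}'$, whose first two derivatives vanish at $t=0$ and which satisfies $f_{\pone,\ptwo}\left(t_{\pone,\ptwo}^{\ast}\right)=0$ because $g_{\pone,\ptwo}'$ does and $t_{\pone,\ptwo}^{\ast}>0$ on $\Delta$. The implication \labelcref{prop-step2new-1}$\Rightarrow$\labelcref{prop-step2new-2} is then immediate: for $t>t_{\pone,\ptwo}^{\ast}>0$ the sign of $g_{\pone,\ptwo}'(t)$ equals that of $f_{\pone,\ptwo}(t)$, so if $g_{\pone,\ptwo}'$ is negative just to the right of $t_{\pone,\ptwo}^{\ast}$ then $f_{\pone,\ptwo}$, which vanishes there, cannot have positive derivative there.

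For the central equivalence \labelcref{prop-step2new-2}$\Leftrightarrow$\labelcref{prop-step2new-3} I would first evaluate $f_{\pone,\ptwo}'$ at $t_{\pone,\ptwo}^{\ast}$. From $\ee^{t_{\pone,\ptwo}^{\ast}}=(1-\pone)(1-\ptwo)/(\pone\ptwo)$ one gets $1+\pone\bigl(\ee^{t_{\pone,\ptwo}^{\ast}}-1\bigr)=(1-\pone)/\ptwo$ and $1+\ptwo\bigl(\ee^{t_{\pone,\ptwo}^{\ast}}-1\bigr)=(1-\ptwo)/\pone$, and substituting these into the formula for $g_{\pone,\ptwo}'$ (and its $t$-derivative) collapses everything to
\begin{equation*}
f_{\pone,\ptwo}'\left(t_{\pone,\ptwo}^{\ast}\right)=-2(1-\pone-\ptwo)+t_{\pone,\ptwo}^{\ast}\,M,\qquad M\coloneqq\pone(1-\pone)+\ptwo(1-\ptwo).
\end{equation*}
Since $M>0$, condition \labelcref{prop-step2new-2} is equivalent to $\mathfrak{s}(\pone,\ptwo)\leq0$, where
\begin{equation*}
\mathfrak{s}(\pone,\ptwo)=\log\left[\frac{1-\pone}{\pone}\frac{1-\ptwo}{\ptwo}\right]-\frac{2(1-\pone-\ptwo)}{M}
\end{equation*}
is the difference between the two sides of \cref{eq-derivtstar}. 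It remains to show that, on $\Delta$, $\mathfrak{s}(\pone,\ptwo)\leq0$ holds exactly when $\pone<p^+$ and $\ptwo\geq\beta(\pone)$, with $\alpha(\pone)<\beta(\pone)<(\pone,1-\pone)^-$ the unique zero of $\ptwo\mapsto\mathfrak{s}(\pone,\ptwo)$ in that range.

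This sign analysis of $\mathfrak{s}$ is the crux. For fixed $\pone$, the map $\ptwo\mapsto\mathfrak{s}(\pone,\ptwo)$ on $\bigl(0,(\pone,1-\pone)^-\bigr]$ tends to $+\infty$ as $\ptwo\to0^+$; at the right endpoint it equals $\mathfrak{r}(\pone)/[\pone(1-\pone)]$ when $\pone<1/2$ — negative by the proof of \cref{theorem-KS-orig} and the identity $g_{p,p}=2g_p$ — and when $\pone\geq1/2$ it vanishes to third order with leading coefficient having the sign of $1-6\pone(1-\pone)=-\mathcal{A}(\pone,1-\pone)$, hence is negative just inside exactly when $\pone<p^+$ (the same threshold as in \cref{lemma-boundarycase}). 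So for $\pone<p^+$ there is at least one zero in the open interval, and for $\pone\geq p^+$ none near the endpoint; what has to be added is uniqueness, and its absence for $\pone\geq p^+$. The plan is to get this from three elementary identities: (i) on $\{\mathfrak{s}=0\}\cap\Delta$, eliminating $t_{\pone,\ptwo}^{\ast}$ and using $\pone(1-\pone)-\ptwo(1-\ptwo)=(\pone-\ptwo)(1-\pone-\ptwo)$,
\begin{equation*}
\partial_{\ptwo}\mathfrak{s}(\pone,\ptwo)=-\frac{(1-\pone-\ptwo)^2\bigl[(\pone-\ptwo)^2-2\ptwo(1-\ptwo)\bigr]}{\ptwo(1-\ptwo)\,M^2},
\end{equation*}
the bracketed quantity being strictly decreasing in $\ptwo$ on $\Delta$ (its $\ptwo$-derivative is $6\ptwo-2\pone-2<0$ there); (ii) $\nabla\mathfrak{s}=0$ forces $\pone(1-\pone)=\ptwo(1-\ptwo)$, impossible in the interior of $\Delta$, so $\mathfrak{s}$ has no interior critical point; (iii) on $\{\mathfrak{s}=0\}\cap\Delta$, $\partial_{\pone}\mathfrak{s}-\partial_{\ptwo}\mathfrak{s}=(\pone-\ptwo)^3(1-\pone-\ptwo)^3/[\pone(1-\pone)\ptwo(1-\ptwo)M^2]>0$. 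By (ii), $\{\mathfrak{s}=0\}$ in the open triangle is a disjoint union of arcs with endpoints on $\partial\Delta$ and no closed loops; the boundary values of $\mathfrak{s}$ (negative on the diagonal edge, identically zero on the edge $\ptwo=1-\pone$, $+\infty$ on $\ptwo=0$) should force it to reduce to a single arc from $\langle0,0\rangle$ to $\langle p^+,p^-\rangle$, and — using that the bracket $(\pone-\ptwo)^2-2\ptwo(1-\ptwo)$ vanishes precisely at $\langle p^+,p^-\rangle$ and stays positive along the arc (whence $\partial_{\ptwo}\mathfrak{s}<0$ there by (i), so the arc is the graph of a function $\beta$ with no vertical tangents, and no touch-point zeros occur) — a count of sign changes of $\ptwo\mapsto\mathfrak{s}(\pone,\ptwo)$ then gives the unique zero for $\pone<p^+$ and its absence for $\pone\geq p^+$. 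The bound $\beta(\pone)>\alpha(\pone)$ comes from $\mathfrak{s}(\pone,\alpha(\pone))>0$: there $\mathcal{A}=0$ and $\mathcal{D}>0$, so \cref{lemma-inflectionpoints}\labelcref{lemma-inflectionpoints-2} together with $f_{\pone,\ptwo}\left(t_{\pone,\ptwo}^{\ast}\right)=0$ force $f_{\pone,\ptwo}'\left(t_{\pone,\ptwo}^{\ast}\right)>0$; the bound $\beta(\pone)<(\pone,1-\pone)^-$ is immediate from the endpoint values, and monotonicity of $\beta$ (hence convexity of $B$) follows by differentiating \cref{eq-derivtstar}.

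Finally, for \labelcref{prop-step2new-3}$\Rightarrow$\labelcref{prop-step2new-1}: if $\ptwo\geq\beta(\pone)>\alpha(\pone)$ then $\mathcal{A}(\pone,\ptwo)>0$, so by \cref{lemma-inflectionpoints} (parts \labelcref{lemma-inflectionpoints-1} and \labelcref{lemma-inflectionpoints-3b}) $f_{\pone,\ptwo}$ is concave on $\left(t_{\pone,\ptwo}^{\ast},\infty\right)$; with $f_{\pone,\ptwo}\left(t_{\pone,\ptwo}^{\ast}\right)=0$ and $f_{\pone,\ptwo}'\left(t_{\pone,\ptwo}^{\ast}\right)\leq0$ this makes $f_{\pone,\ptwo}$ strictly decreasing, hence negative, on $\left(t_{\pone,\ptwo}^{\ast},\infty\right)$, i.e.\ $g_{\pone,\ptwo}'<0$ there. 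Conversely, if \labelcref{prop-step2new-3} fails then $\mathfrak{s}(\pone,\ptwo)>0$, so \labelcref{prop-step2new-2} fails and, by the implication already proved, so does \labelcref{prop-step2new-1}. The hard part will be the global sign analysis of $\mathfrak{s}$ in the third step — pinning down the arc structure of $\{\mathfrak{s}=0\}$ and showing that $(\pone-\ptwo)^2-2\ptwo(1-\ptwo)$ stays positive along it — together with the several routine but lengthy sign checks of higher derivatives that this requires.
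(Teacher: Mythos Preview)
Your overall structure matches the paper's exactly: (i)$\Rightarrow$(ii) from $f_{\pone,\ptwo}(t_{\pone,\ptwo}^{\ast})=0$, (ii)$\Leftrightarrow$(iii) via the sign of $\mathfrak{s}$ (the paper calls it $h_{\pone}$), and (iii)$\Rightarrow$(i) from concavity of $f_{\pone,\ptwo}$ on $(t_{\pone,\ptwo}^{\ast},\infty)$. Your computation of $f_{\pone,\ptwo}'(t_{\pone,\ptwo}^{\ast})$ and your formula for $\partial_{\ptwo}\mathfrak{s}$ are both correct and coincide with the paper's; in fact your bracket $(\pone-\ptwo)^2-2\ptwo(1-\ptwo)$ is precisely the paper's quadratic $\mathfrak{q}_{\pone}(\ptwo)=\pone^2-2(1+\pone)\ptwo+3\ptwo^2$, and the formula holds on all of $\Delta$, not only on $\{\mathfrak{s}=0\}$.

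The one place you diverge is the uniqueness argument, and there you work much harder than necessary and leave a gap. You already have everything you need: since $\partial_{\ptwo}\mathfrak{s}=-\dfrac{(1-\pone-\ptwo)^2}{\ptwo(1-\ptwo)M^2}\,\mathfrak{q}_{\pone}(\ptwo)$ with $\mathfrak{q}_{\pone}$ strictly decreasing in $\ptwo$ on $\Delta$ (your own observation), the map $\ptwo\mapsto\mathfrak{s}(\pone,\ptwo)$ has at most one critical point there, necessarily a minimum. Combined with $\mathfrak{s}\to+\infty$ at $\ptwo\to0^+$ and your endpoint values at $\ptwo=(\pone,1-\pone)^-$, this gives the unique root $\beta(\pone)$ for $\pone<p^+$ and its absence for $\pone\geq p^+$ in one line, with no need for the level-set topology. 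The paper does essentially this (phrased as ``$h_{\pone}'$ is, up to positive factors, a quadratic polynomial in $\ptwo$''), though your monotonicity of $\mathfrak{q}_{\pone}$ on $\Delta$ is actually sharper than what the paper extracts. Your arc argument, by contrast, is incomplete: you do not rule out multiple arcs emanating from the degenerate edge $\ptwo=1-\pone$ (where $\mathfrak{s}\equiv0$), nor do you justify that the bracket stays positive along the zero set.

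Two smaller points. First, your argument for $\beta(\pone)>\alpha(\pone)$ via \cref{lemma-inflectionpoints}\labelcref{lemma-inflectionpoints-2} is not convincing: knowing that $t_{\pone,\ptwo}^{\ast}$ is an inflection point and that $f_{\pone,\ptwo}(t_{\pone,\ptwo}^{\ast})=0$ does not by itself force $f_{\pone,\ptwo}'(t_{\pone,\ptwo}^{\ast})>0$. The paper establishes this by checking directly that $\mathfrak{s}(\pone,\alpha(\pone))>0$. Second, the phrase ``eliminating $t_{\pone,\ptwo}^{\ast}$'' in your identity~(i) is spurious: $\partial_{\ptwo}\mathfrak{s}$ contains no logarithm, so there is nothing to eliminate and the identity holds everywhere.
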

\begin{proof}
We recall that the negativity of $g_{\pone,\ptwo}'$ on $\left(t_{\pone,\ptwo}^{\ast},\infty\right)$ is equivalent to the negativity of $f_{\pone,\ptwo}$ on that interval. As in the proof of \cref{prop-step1new} we thus see that \labelcref{prop-step2new-1} implies \labelcref{prop-step2new-2} because $f_{\pone,\ptwo}$ vanishes at $t=t_{\pone,\ptwo}^{\ast}$. We next prove the equivalence of \labelcref{prop-step2new-2} and \labelcref{prop-step2new-3}. Direct calculation informs us that 
\begin{align*}
h_{\pone}(\ptwo)\coloneqq& \frac{f_{\pone,\ptwo}'\left(t_{\pone,\ptwo}^{\ast}\right)}{\pone(1-\pone)+\ptwo(1-\ptwo)}\\
  =&\log\left[\frac{1-\pone}{\pone}\frac{1-\ptwo}{\ptwo}\right] - \frac{2(1-\pone-\ptwo)}{\pone(1-\pone)+\ptwo(1-\ptwo)}\\
\intertext{and}
\frac{\dd}{\dd \ptwo}h_{\pone}(\ptwo) =& -\frac{(1-\pone-\ptwo)^2}{\ptwo(1-\ptwo)\left[\pone(1-\pone)+\ptwo(1-\ptwo)\right]^2}\mathfrak{q}_{\pone}(\ptwo),
\end{align*}
where $\mathfrak{q}_{\pone}(x)=\pone^2-2(1+\pone)\ptwo+3\ptwo^2$ is a polynomial of degree two. We will show that, for each $\pone\in\left(0,p^+\right)$, the function $h_{\pone}$ has a unique root $\beta(\pone)<(\pone,1-\pone)^-$ and that $h_{\pone}(\ptwo)$ is positive for $\ptwo<\beta(\pone)$ and negative for $\ptwo>\beta(\pone)$. To this end we first observe that $\lim_{\ptwo\to0}h_{\pone}(\ptwo)=+\infty$ and $\lim_{\ptwo\to0}h_{\pone}'(\ptwo)=-\infty$ and that the derivative $h_{\pone}'$ is, up to positive factors, a quadratic polynomial in $\ptwo$. For $\pone<1/2$, the upper boundary of $\Delta$ is given by $\ptwo=\pone$ and we thus compute
\begin{equation*}
h_{\pone}(\pone) = 2\log\left[\frac{1-\pone}{\pone}\right]-\frac{1-2\pone}{\pone(1-\pone)},\quad h_{\pone}'(\pone) = \frac{(1-2\pone)^2}{2\pone^2(1-\pone)^2}.
\end{equation*}
The former expression is negative because it vanishes for $\pone=1/2$ and has a positive $\pone$-derivative equal to $(1-2\pone)^2/[\pone(1-\pone)]^2$; the latter expression is manifestly positive. The existence of a unique root $\beta(\pone)$ with the claimed property thus follows from the intermediate value theorem. For $\pone>1/2$ the upper boundary is given by $\ptwo=1-\pone$; in this case we compute
\begin{equation*}
h_{\pone}(1-\pone) =  h_{\pone}'(1-\pone) =  h_{\pone}''(1-\pone) = 0, \quad h_{\pone}^{(3)}(1-\pone) = -\frac{1-6\pone(1-\pone)}{2\left[\pone(1-\pone)\right]^3}.
\end{equation*}
If $\pone$ is less than $p^+$, the last expression is positive and the intermediate value theorem guarantees the existence of a unique root $\beta(\pone)$ as before. If $\pone$ exceeds $p^+$, there is no such root. To see this, it is enough to compute the smallest stationary point of the function $\ptwo\mapsto \mathfrak{q}_{\pone}(\ptwo)$, which is given by $x=\left(1+\pone+\sqrt{2\pone(1-\pone)+1}\right)/3$. and observe that it exceeds $1-\pone$ if and only if $\pone$ is greater than $p^+$. The claim that $\beta(\pone)$ exceeds $\alpha(\pone)$ follows from the fact that $h_{\pone}(\alpha(\pone))$ is positive for $0<\pone<p^+$, which is a tedious, but not difficult, calculation, the details of which we omit.

Lastly, we prove the implication $\labelcref{prop-step2new-2}\wedge\labelcref{prop-step2new-3}\Rightarrow\labelcref{prop-step2new-1}$: if $\ptwo\geq \beta(\pone)>\alpha(\pone)$, then \cref{lemma-inflectionpoints} implies that the function $f_{\pone,\ptwo}$ is concave on $\left(t_{\pone,\ptwo}^{\ast},\infty\right)$, Thus the negativity of the first derivative $f_{\pone,\ptwo}'\left(t_{\pone,\ptwo}^{\ast}\right)$, in conjunction with the fact that the value $f_{\pone,\ptwo}\left(t_{\pone,\ptwo}^{\ast}\right)$ is zero, implies that $f_{\pone,\ptwo}(t)$, and hence $g_{\pone,\ptwo}'(t)$, are negative for $t$ exceeding $t_{\pone,\ptwo}^{\ast}$. 
\end{proof}
One checks directly that an explicit parametrisation of the graph of $\beta$ is given by
\begin{equation}
\label{eq-parambeta}
\mathfrak{b}:\mathbb{R}^+\to\Delta;\quad \tau\mapsto j(\tau)\left(\begin{array}{c}1\\1\end{array}\right)+\sqrt{-\partial_\tau j(\tau)}\left(\begin{array}{c}1\\-1\end{array}\right);\quad j(\tau)=\frac{1}{\tau}-\frac{1}{\ee^{\tau}-1},
\end{equation}and that $\mathfrak{b}(t)$ is a saddle point of the function $\langle \pone,\ptwo\rangle\mapsto f_{\pone,\ptwo}(t)$. In particular, letting $\tau\to0$, we find that $\beta\left(p^+\right)=p^-$. The parametrisation can also be used to show that $\beta$ is monotonely increasing and convex. The inequality $\ptwo\geq\beta(\pone)$, together with analogous inequalities for $\pone +\ptwo>1$ and $\pone<\ptwo$ define the set $B$ from the statement of \cref{thm-unimodalitynew}. Equivalently,
\begin{equation}
\label{eq-defB}
B=\left\{\langle\pone,\ptwo\rangle\in[0,1]^2:\left|\log\left(\frac{1-\pone}{\pone}\frac{1-\ptwo}{\ptwo}\right)\right|\leq\frac{2 |1-\pone -\ptwo |}{\pone (1-\pone)+\ptwo (1-\ptwo)}\right\}.
\end{equation}
The fact that $\beta(\pone)$ exceeds $\alpha(\pone)$ for all $\pone\in\left(0,p^+\right)$ translates directly into the inclusion $B\subset A$. The set $B$ is convex because the function $\beta$ is convex and $\beta'\left(p^+\right)=1$.

It remains to analyse the interval $\left(0,t_{\pone,\ptwo}^{\ast}\right)$. Before we prove, in \cref{prop-step3new}, that there exists a function $\gamma$ such that $f_{\pone,\ptwo}$ is positive on that interval if and only if $\ptwo>\gamma(\pone)$, we give a proof of \cref{thm-sufficientunimodality}.
\begin{proof}[Proof of \cref{thm-sufficientunimodality}]
The intersection of the triangular region $\Delta$ with the set $D$ defined in \cref{eq-inclusionDA} can be described as $\left\{\langle\pone,\ptwo\rangle\in[0,1]^2:\pone< p^+ \wedge \ptwo\geq\delta(\pone)\right\}$, where $\delta:\left[0,p^+\right]\to\left[0,p^-\right]$ is defined by
\begin{equation}
\label{eq-defdelta}
\delta(\pone) = \frac{\left(7-4\sqrt 3\right)\pone-\left(6-\sqrt 3\right)\pone^2}{1+12\pone(1-\pone)}.
\end{equation}
Similarly to $\alpha$ and $\beta$ the function $\delta$ is convex and satisfies $\delta'\left(p^+\right)=1$ which implies that the set $D$ is convex. The inclusion $D\subset B$ is thus equivalent to $\delta(\pone)>\beta(\pone)$; as in the proof of \cref{prop-step2new} this can be shown by verifying that $h_{\pone}(\delta(\pone))$ is negative for all $\pone\in\left(0,p^+\right)$. For $\langle\pone,\ptwo\rangle\in D\subset B\subset A$, the negativity of $g_{\pone,\ptwo}$ on $\left(-\infty,0\right)$ thus follows from \cref{prop-step1new}; the positivity of $g_{\pone,\ptwo}$ on $\left(t_{\pone,\ptwo}^{\ast},+\infty\right)$ follows from \cref{prop-step2new}. To see that $g_{\pone,\ptwo}$ is positive on the interval $\left(0,t_{\pone,\ptwo}^{\ast}\right)$ it suffices to recall that $f_{\pone,\ptwo}$ vanishes at $t=0$ and $t=t_{\pone,\ptwo}^{\ast}$, that the first non-zero derivative of $f_{\pone,\ptwo}$ at these points is positive and negative respectively, and that $f_{\pone,\ptwo}$ is convex in-between (\cref{lemma-inflectionpoints}).
\end{proof}

In the next lemma we analyse how the sign of $f_{\pone,\ptwo}(t)$ varies when $t$ and $\pone$ are held fixed and $\ptwo$ changes. We introduce the notations $p_t=1/[1+\exp(t/2)]$ and $\langle u(t),v(t)\rangle=\mathfrak{b}(t)$, defined in \cref{eq-parambeta}. The notation is illustrated in \cref{fig-Dg0}.

\begin{figure}
\centering
\includegraphics[width=1.\textwidth]{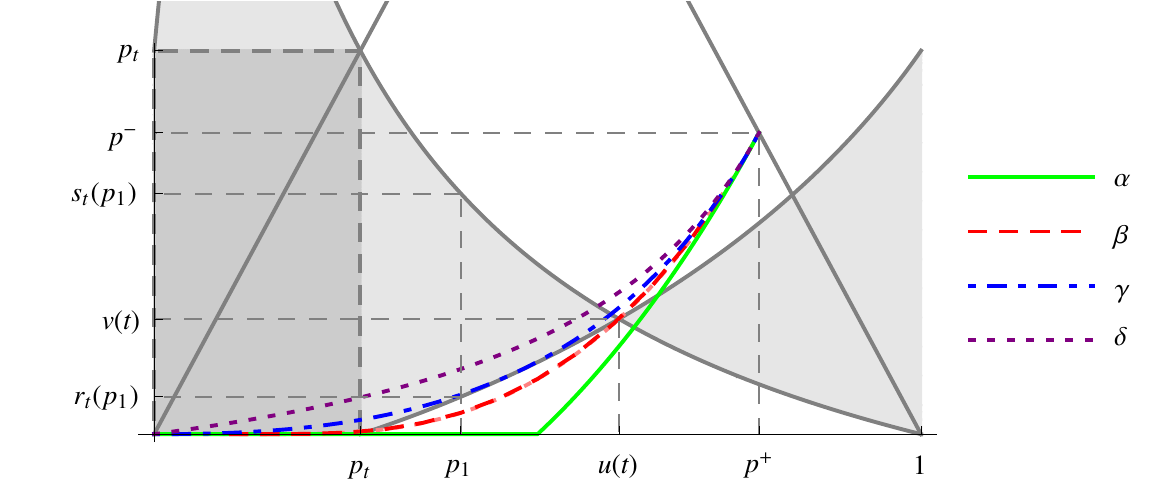}
\caption{Illustration of the notation used in \cref{lemma-Dgzero,prop-step3new}. 
The shaded region represents the set $\{f_{\pone,\ptwo}(t)>0\}$. 
The solid green curve is the graph of the function $\alpha$ defined in \cref{eq-derivzero} and represents the boundary for $g_{\pone,\ptwo}'(t)$ to be positive for $t<0$.
The red dashed curve, visualising the function $\beta$ defined parametrically in \cref{eq-parambeta}, represents the boundary for $g_{\pone,\ptwo}'(t)$ to be negative for $t>t_{\pone,\ptwo}^{\ast}$.
The blue dash-dotted curve represents the boundary for $g_{\pone,\ptwo}'(t)$ to be positive for $0<t<t_{\pone,\ptwo}^{\ast}$ and is derived from \cref{eq-secondordercondition}.
The purple line is the graph of the function $\delta$, defined in \cref{eq-defdelta}; it represents the boundary for $f_{\pone,\ptwo}$ to have exactly two inflection points.\\
The value of $\pone$ is $0.4$, the value of $t=2$.
}
\label{fig-Dg0}
\end{figure}

\begin{lemma}
\label{lemma-Dgzero}
For every $t>0$, the function $\langle\pone,\ptwo\rangle\mapsto f_{\pone,\ptwo}(t)$ has the the following properties:
\begin{enumerate}
\item\label{lemma-Dgzero-1} if $\pone<p_t$, then $f_{\pone,\ptwo}(t)$ is positive for all $\ptwo\in\left(0,(\pone,-1\pone)^-\right)$;
\item\label{lemma-Dgzero-2} if $p_t\leq \pone<u(t)$, then there exist $0<r_t(\pone)<s_t(\pone)<(\pone,1-\pone)^-$ such that $f_{\pone,\ptwo}(t)$ is negative for $\ptwo\in\left(0,r_t(\pone)\right)$; positive for $\ptwo\in\left(r_t(\pone),s_t(\pone)\right)$; and negative for $\ptwo\in\left(s_t(\pone),(\pone,1-\pone)^-\right)$. The larger boundary point $s_t(\pone)$ satisfies $t^{\ast}_{\pone,s_t(\pone)}=t$ which is equivalent to
\begin{equation}
\label{eq-stpone}
s_t(\pone)=\frac{1-\pone}{1+\pone \left(\ee^t-1\right)};
\end{equation}
\end{enumerate}
\end{lemma}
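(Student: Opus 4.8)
\section*{Proof proposal for \cref*{lemma-Dgzero}}

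The plan is to exploit the additive structure $f_{\pone,\ptwo}(t) = f_{\pone}(t)+f_{\ptwo}(t)$, where $f_p(t)=t^3g_p'(t)$ is the function from \cref{lemma-inflectionpoints-old}. Fix $t>0$, write $a=\ee^t-1>0$, and set $\psi_t(p):=f_p(t)$, regarded as a function of $p\in[0,1]$; then for fixed $\pone$ the map $\ptwo\mapsto f_{\pone,\ptwo}(t)$ is the vertical translate $\ptwo\mapsto f_{\pone}(t)+\psi_t(\ptwo)$ of $\psi_t$. Part \labelcref{lemma-Dgzero-1} is then immediate: since $p_t<1/2$ for $t>0$, the hypothesis $\pone<p_t$ gives $(\pone,1-\pone)^-=\pone$, so every admissible $\ptwo$ satisfies $\ptwo<\pone<p_t$; and $p<p_t$ is equivalent to $t<t_p^{\ast}$, so \cref{theorem-KS-orig} yields $f_{\pone}(t),f_{\ptwo}(t)>0$, hence $f_{\pone,\ptwo}(t)>0$.

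For part \labelcref{lemma-Dgzero-2} I would first pin down the shape of $\psi_t$ on $[0,1]$. A direct computation gives $\psi_t(0)=\psi_t(1)=0$ and
\[
\psi_t'(p)=\frac{Q_t(p)}{(1+pa)^2},\qquad Q_t(p)=a^2t\,p^2+2a(t-a)\,p+(2t+at-2a),
\]
an upward-opening quadratic, and $\psi_t'(0),\psi_t'(1)$ are positive multiples of $h(t):=2t+at-2a=t+t\ee^t-2\ee^t+2$, which is positive for $t>0$ (it vanishes at $t=0$ together with its first two derivatives, and $h'''>0$). Since $\psi_t$ is increasing at $0$ but returns to $0$ at $1$, it is not monotone, so the upward parabola $Q_t$, being positive at $0$ and $1$, must have two roots in $(0,1)$; these are exactly $v(t)$ and $u(t)$, the coordinates of the saddle point $\mathfrak{b}(t)$ of $\langle\pone,\ptwo\rangle\mapsto f_{\pone,\ptwo}(t)$ (equivalently, one checks directly that the roots of $Q_t$ are $\bigl[(a-t)\pm\sqrt{a^2-t^2\ee^t}\,\bigr]/(at)$). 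Hence $\psi_t$ increases on $(0,v(t))$, decreases on $(v(t),u(t))$ and increases on $(u(t),1)$, so it has a single zero in $(0,1)$, lying in $(v(t),u(t))$; since $t^{\ast}_{p_t}=t$ forces $\psi_t(p_t)=f_{p_t}(t)=0$, this zero is $p_t$, and $v(t)<p_t<u(t)$, $\psi_t>0$ on $(0,p_t)$, $\psi_t<0$ on $(p_t,1)$.

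The key algebraic fact is the identity $\psi_t(\pone)+\psi_t\bigl(s_t(\pone)\bigr)=0$, valid because $t^{\ast}_{\pone,s_t(\pone)}=t$ and $f_{\pone,\ptwo}$ vanishes at $t=t^{\ast}_{\pone,\ptwo}$, with $s_t(\pone)=(1-\pone)/(1+\pone a)$ as in \cref{eq-stpone}. This shows at once that $s_t(\pone)$ is a zero of $\ptwo\mapsto f_{\pone,\ptwo}(t)$, and differentiating in $\pone$ (using $s_t'(\pone)=-(1+a)/(1+\pone a)^2<0$) gives $\psi_t'\bigl(s_t(\pone)\bigr)=Q_t(\pone)/(1+a)$, so $\psi_t'(s_t(\pone))$ has the sign of $Q_t(\pone)$. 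Now take $p_t<\pone<u(t)$ (the boundary $\pone=p_t$, where $s_t(\pone)=\pone=(\pone,1-\pone)^-$, is degenerate and I would dispose of it by a separate limiting remark). Then $\pone\in(v(t),u(t))$, so $Q_t(\pone)<0$; hence $C:=\psi_t(\pone)<0$, $\psi_t'(s_t(\pone))<0$ so $s_t(\pone)\in(v(t),u(t))$, and $\psi_t(s_t(\pone))=-C>0$, which forces $s_t(\pone)\in(v(t),p_t)$ and $\psi_t(v(t))>\psi_t(s_t(\pone))=-C$. It remains to read off the sign of $F:=C+\psi_t$ on the three intervals cut out by $v(t)$ and $u(t)$: on $(0,v(t))$, $F$ increases from $C<0$ to $F(v(t))=C+\psi_t(v(t))>0$, giving a unique zero $r_t(\pone)\in(0,v(t))$ with $F<0$ before and $F>0$ after; on $(v(t),u(t))$, $F$ decreases from $F(v(t))>0$ to $F(u(t))<0$, its unique zero there being $s_t(\pone)$; on $(u(t),1)$, $F=C+\psi_t<C<0$. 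Finally $s_t(\pone)<(\pone,1-\pone)^-$ — for $\pone<1/2$ this is $s_t(\pone)<\pone$, equivalent to $t>t_{\pone}^{\ast}$, i.e. $\pone>p_t$; for $\pone\geq1/2$ it is the obvious $s_t(\pone)=(1-\pone)/(1+\pone a)<1-\pone$ — so the break points satisfy $0<r_t(\pone)<s_t(\pone)<(\pone,1-\pone)^-$, and restricting $F$ to $\bigl(0,(\pone,1-\pone)^-\bigr)$ yields exactly the claimed sign pattern.

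The fiddly but central steps are the computation of $\psi_t'$ and the verification that its numerator is the quadratic $Q_t$ above with $u(t)$ as its larger root: this is what ties the threshold $u(t)$, originally defined through the boundary of $B$, to the reflection $s_t$ via $\psi_t'(s_t(\pone))\propto Q_t(\pone)$. Everything else is bookkeeping about the signs of a translate of a function whose derivative has the $+\,-\,+$ pattern established in the second step.
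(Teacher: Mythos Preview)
Your argument is correct and is a genuinely different — and in several ways cleaner — route than the paper's. The paper never isolates the single-variable function $\psi_t(p)=f_p(t)$; instead, for part \labelcref{lemma-Dgzero-1} it proves positivity on the whole square $(0,p_t)^2$ by checking the sign of $f_{\pone,\ptwo}(t)$ on the two edges $\ptwo=0$ and $\ptwo=p_t$ (each via ``quadratic derivative in $\pone$ plus endpoint signs''), and then sweeps across with the same quadratic-in-$\ptwo$ observation. For part \labelcref{lemma-Dgzero-2} it again works edge-by-edge, this time at $\ptwo=0$ and $\ptwo=1/2$, and extracts $r_t(\pone)$ from the intermediate value theorem once $s_t(\pone)$ is in hand. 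Your approach instead (a) dispatches \labelcref{lemma-Dgzero-1} in one line from \cref{theorem-KS-orig} and additivity, and (b) analyses $\psi_t$ once and for all, identifying its two critical points with the saddle coordinates $v(t)<u(t)$ and its unique interior zero with $p_t$, so that the sign pattern of the translate $C+\psi_t$ falls out immediately. The identity $\psi_t'(s_t(\pone))=Q_t(\pone)/(1+a)$, which you derive by differentiating $\psi_t(\pone)+\psi_t(s_t(\pone))=0$, is what lets you locate $s_t(\pone)$ in $(v(t),p_t)$ and thereby bound $\psi_t(v(t))>-C$; this is the step that replaces the paper's separate endpoint computations at $\ptwo=1/2$. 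What your approach buys is a transparent link between the threshold $u(t)$ (defined through the parametrisation $\mathfrak{b}$ of the boundary of $B$) and the present lemma; what the paper's approach buys is that it never needs to know that the roots of $Q_t$ coincide with $\langle u(t),v(t)\rangle$, so it is marginally more self-contained.
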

\begin{proof}
To establish \labelcref{lemma-Dgzero-1} we will prove the slightly stronger claim that $\langle\pone,\ptwo\rangle\mapsto f_{\pone,\ptwo}(t)$ is positive on the open square $\left(0,p_t\right)^2$. We first show that, if $\pone<p_t$, then $f_{\pone,\ptwo}(t)$ is positive for $\ptwo =0$ and $\ptwo =p_t$. To see that $f_{\pone,0}(t)$ is positive for $0<\pone<p_t$, we observe that it vanishes for $\pone =0$ and $\pone =p_t$, that
\begin{equation*}
\frac{\dd}{\dd \pone}f_{\pone,0}(t)=\frac{t\left(\ee^t-1\right)^2\pone ^2 - 2\left(\ee^t-1\right)\left(\ee^t-t-1\right)\pone  -2\left(\ee^t-1\right)+t\left(\ee^t+1\right)}{\left[1+\pone \left(\ee^t-1\right)\right]^2}
\end{equation*}
is, up to positive factors, a quadratic function in $\pone$, and that
\begin{equation*}
\left.\frac{\dd}{\dd \pone}f_{\pone,0}(t)\right|_{\pone =0} = \ee^t(t-2)+t+2 >0\;\text{and}\; \left.\frac{\dd}{\dd \pone}f_{\pone,0}(t)\right|_{\pone =p_t} = 2\left(t-2\sinh{(t/2)}\right) < 0.
\end{equation*}
For $\ptwo =p_t$ we obtain that $f_{\pone,p_t}(t)$ is positive for $\pone<p_t$ by the same argument, namely by noting that $f_{0,p_t}(t)=f_{p_t,p_t}(t)=0$, that $\partial_{\pone}f_{\pone,p_t}(t)$ is, up to positive factors, a quadratic function in $\pone$, and that the value of this derivative at the endpoints $\pone =0$ and $\pone =p_t$ is positive and negative, respectively.

Having established the positivity of $f_{\pone,\ptwo}(t)$ on two opposite sides of the square $\left(0,p_t\right)^2$ we can conclude the proof of the first part of the lemma by analysing the derivative in the direction orthogonal to these side, namely with respect to $\ptwo$. We find that
$\partial_{\ptwo} f_{\pone,\ptwo}(t)$ is up to positive factors a quadratic polynomial in $\pone$ which is positive for $\ptwo =0$ and negative for $\ptwo =p_t$. Therefore $f_{\pone,\ptwo}(t)$ is positive for all $0<\ptwo<p_t$, and in particular for $\ptwo<(\pone,1-\pone)^-<\pone<p_t$.

Part \labelcref{lemma-Dgzero-2} is proved in a similar way as part \labelcref{lemma-Dgzero-1}: we first show that $f_{\pone,0}(t)$ is negative: this follows from the observations that both $f_{p_t,0}(t)$ and $f_{1,0}(t)$ vanish, that $\partial_{\pone}f_{\pone,0}(t)$ has at most two roots in $[p_t,1]$, and that the values of this derivative at the endpoints $\pone =p_t$ and $\pone =1$ are negative and positive, respectively. Similarly, the facts that both $f_{p_t,1/2}(t)$ and $f_{1,1/2}(t)$ are negative, that $\partial_{\pone}f_{\pone,1/2}(t)$ has at most two roots in $[p_t,1]$, and that the values of this derivative at the endpoints $\pone =p_t$ and $\pone =1$ are negative and positive, respectively, show that  $f_{\pone,1/2}(t)$ is negative if $\pone>p_t$.

Turning now to the derivative with respect to $\ptwo$, we reiterate that the derivative of $f_{\pone,\ptwo}(t)$ with respect to $\ptwo$ has at most two roots in $[0,\pone ]$. We further note that
\begin{equation*}
\left.\frac{\dd}{\dd \ptwo}f_{\pone,\ptwo}(t)\right|_{\ptwo =0}>0,\quad\text{and}\quad\left.\frac{\dd}{\dd \ptwo}f_{\pone,\ptwo}(t)\right|_{\ptwo =1/2}=\frac{t\left[\cosh t +3\right] -4 \sinh t}{\cosh t+1}<0,
\end{equation*}
The fact that $f_{\pone,\ptwo}\left(t_{\pone,\ptwo}^{\ast}\right)$ vanishes implies that the function $\ptwo \mapsto f_{\pone,\ptwo}(t)$ has a zero at $\ptwo =s_t(\pone)<(\pone,1-\pone)^-\leq1/2$, and that $s_t(\pone)$ is given by \cref{eq-stpone}. If $p_t<\pone<u(t)$, the derivative $\partial_{\ptwo}f_{\pone,\ptwo}(t)$ is negative at $\ptwo =s_t(\pone)$ and hence, by the intermediate value theorem, there must exist another zero in the interval $\left(0,s_t(\pone)\right)$, which we call $r_t(\pone)$. There cannot be more than two zeros because $\partial_{\ptwo}f_{\pone,\ptwo}(t)$ is a quadratic function in $\ptwo$.
\end{proof}

\setlength{\fboxsep}{0.3pt}
\begin{figure}
       \centering
        \begin{subfigure}[b]{0.35\textwidth}
                \centering
                \fbox{\includegraphics[width=\textwidth]{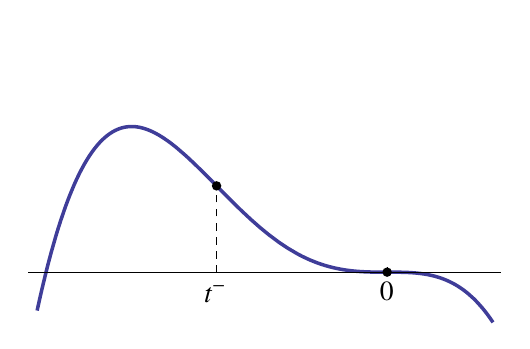}}
                \caption{$\ptwo=0$}
                \label{fig-seq-00}
        \end{subfigure}
        \hspace{1cm}
        \begin{subfigure}[b]{0.35\textwidth}
                \centering
                \fbox{\includegraphics[width=\textwidth]{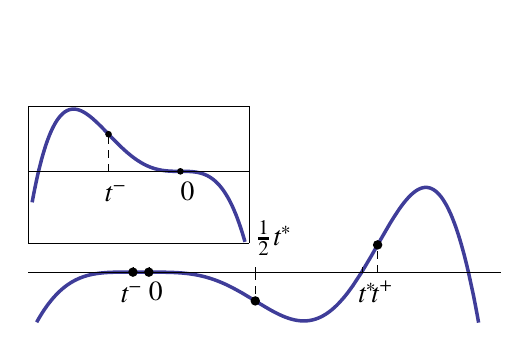}}
                \caption{$\ptwo<\alpha(\pone)$}
                \label{fig-seq-0}
        \end{subfigure}
        
       \vspace{.3cm}
       
	\begin{subfigure}[b]{0.35\textwidth}
                \centering
                \fbox{\includegraphics[width=\textwidth]{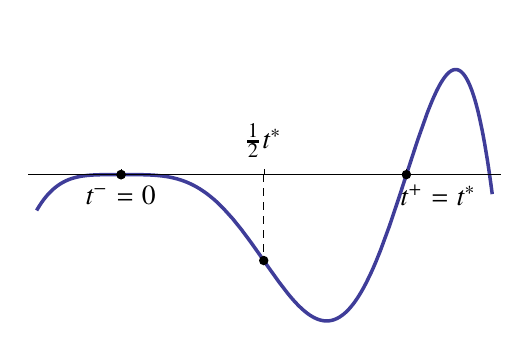}}
                \caption{$\ptwo=\alpha(\pone)$}
                \label{fig-seq-a}
        \end{subfigure}
        \hspace{1cm}
        \begin{subfigure}[b]{0.35\textwidth}
                \centering
                \fbox{\includegraphics[width=\textwidth]{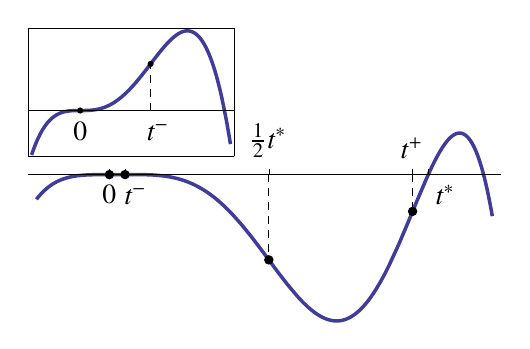}}
                \caption{$\alpha(\pone)<\ptwo<\beta(\pone)$}
                \label{fig-seq-ab}
        \end{subfigure}
        
       \vspace{.3cm}
        
	\begin{subfigure}[b]{0.35\textwidth}
                \centering
                \fbox{\includegraphics[width=\textwidth]{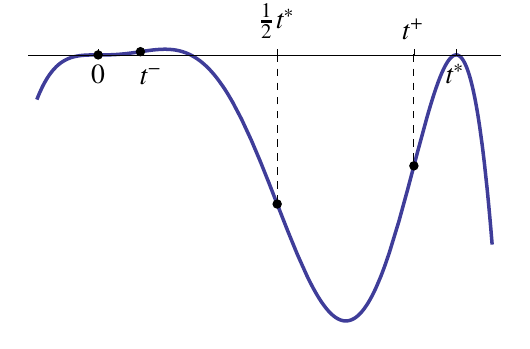}}
                \caption{$\ptwo=\beta(\pone)$}
                \label{fig-seq-b}
        \end{subfigure}
        \hspace{1cm}
        \begin{subfigure}[b]{0.35\textwidth}
                \centering
                \fbox{\includegraphics[width=\textwidth]{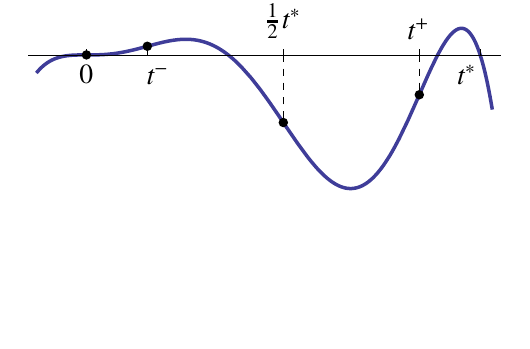}}
                \caption{$\beta(\pone)<\ptwo<\gamma(\pone)$}
                \label{fig-seq-bc}
        \end{subfigure}
	 
       \vspace{.3cm}
       
	\begin{subfigure}[b]{0.35\textwidth}
                \centering
                \fbox{\includegraphics[width=\textwidth]{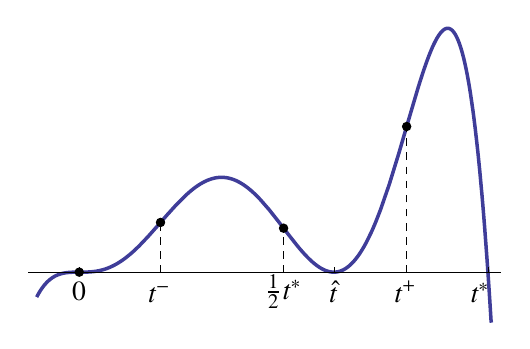}}
                \caption{$\ptwo=\gamma(\pone)$}
                \label{fig-seq-c}
        \end{subfigure}
        \hspace{1cm}
        \begin{subfigure}[b]{0.35\textwidth}
                \centering
                \fbox{\includegraphics[width=\textwidth]{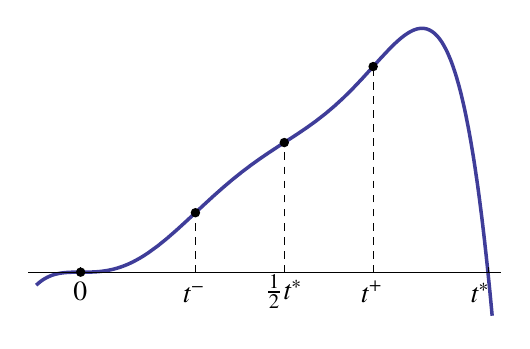}}
                \caption{$\gamma(\pone)<\ptwo<\delta(\pone)$}
                \label{fig-seq-cd}
        \end{subfigure}
	 
       \vspace{.3cm}
       
	\begin{subfigure}[b]{0.35\textwidth}
                \centering
                \fbox{\includegraphics[width=\textwidth]{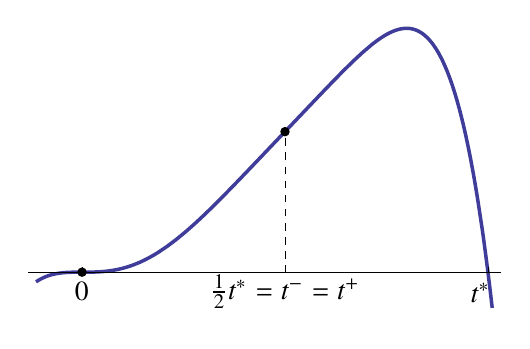}}
                \caption{$\ptwo=\delta(\pone)$}
                \label{fig-seq-d}
        \end{subfigure}
        \hspace{1cm}
        \begin{subfigure}[b]{0.35\textwidth}
                \centering
                \fbox{\includegraphics[width=\textwidth]{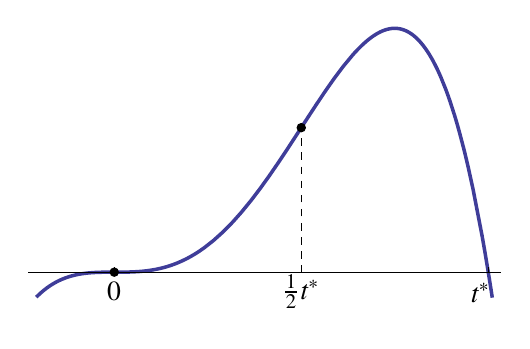}}
                \caption{$\ptwo>\delta(\pone)$}
                \label{fig-seq-inf}
        \end{subfigure}    
        \caption{Typical shapes of the functions $f_{\pone,\ptwo}$ as $\ptwo$ increases. The axes in the panels are not the same scale. We marked the location $t^{\ast}$ of the maximum of $g_{\pone,\ptwo}$ as well as the abscissas $0,t^-,t^+,t^{\ast}/2$ of the inflection points of $f_{\pone,\ptwo}$. The value of $\pone$ is $0.7$.}
        \label{fig-seq}
\end{figure}

\Cref{fig-Dg0} suggests that $\beta(\pone)$ exceeds $s_t(\pone)$ if and only $\pone\geq u(t)$. This is indeed true and can be proved by analysing the sign of $h_{\pone}\left(s_t(\pone)\right)$, where $h_{\pone}$ is the function defined in the proof of \cref{prop-step2new}. The calculations are lengthy, however, and therefore omitted. We now complete the proof of the unimodality of $g_{\pone,\ptwo}$ by analysing the sign of the function $f_{\pone,\ptwo}$ on the interval $\left(0,t_{\pone,\ptwo}^{\ast}\right)$.
\begin{proposition}
\label{prop-step3new}
There exists a function $\gamma:[0,p^+]\to[0,p^-]$ such that, for every $\langle\pone,\ptwo\rangle\in\Delta$, the derivative of the function $g_{\pone,\ptwo}$ is positive for $0<t<t_{\pone,\ptwo}^{\ast}$ if and only if $\pone<p^+$ and $\ptwo>\gamma(\pone)$.
\end{proposition}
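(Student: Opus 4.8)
The plan is to fix $\pone$ and to describe, as a subset of the fibre $\{\ptwo:\langle\pone,\ptwo\rangle\in\Delta\}$, the set $G(\pone)$ of those $\ptwo$ for which $g_{\pone,\ptwo}'$ is positive on $(0,t_{\pone,\ptwo}^{\ast})$; since $t^{3}>0$ on that interval, $\ptwo\in G(\pone)$ is equivalent to $f_{\pone,\ptwo}>0$ there. I would use repeatedly that $f_{\pone,\ptwo}(0)=f_{\pone,\ptwo}'(0)=f_{\pone,\ptwo}''(0)=0$ with $f_{\pone,\ptwo}^{(3)}(0)=(1-\pone-\ptwo)\mathcal{A}(\pone,\ptwo)$, that $f_{\pone,\ptwo}(t_{\pone,\ptwo}^{\ast})=0$, and that $f_{\pone,\ptwo}'(t_{\pone,\ptwo}^{\ast})$ has the sign of $h_{\pone}(\ptwo)$ from the proof of \cref{prop-step2new}. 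The claim then amounts to: $G(\pone)=\emptyset$ for $\pone\geq p^{+}$, and $G(\pone)=(\gamma(\pone),(\pone,1-\pone)^{-})$ for some $\gamma(\pone)\in[0,p^{-}]$ when $\pone<p^{+}$.

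First I would establish a lower obstruction coming from the right endpoint $t_{\pone,\ptwo}^{\ast}$. If $\pone\geq p^{+}$, or if $\pone<p^{+}$ and $\ptwo<\beta(\pone)$, then $h_{\pone}(\ptwo)>0$ (proof of \cref{prop-step2new}; the case $\pone=p^{+}$ follows by continuity since $\beta(p^{+})=p^{-}$), so $f_{\pone,\ptwo}'(t_{\pone,\ptwo}^{\ast})>0$; together with $f_{\pone,\ptwo}(t_{\pone,\ptwo}^{\ast})=0$ and $t_{\pone,\ptwo}^{\ast}>0$ this makes $f_{\pone,\ptwo}$ negative just to the left of $t_{\pone,\ptwo}^{\ast}$, hence $\ptwo\notin G(\pone)$. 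The border case $\ptwo=\beta(\pone)$ gives $f_{\pone,\ptwo}'(t_{\pone,\ptwo}^{\ast})=0$, but $\langle\pone,\beta(\pone)\rangle$ lies in the regime $\mathcal{D}>0$, $\mathcal{A}>0$ (since $\alpha(\pone)<\beta(\pone)<\delta(\pone)$), so by \cref{lemma-inflectionpoints} the largest inflection abscissa $t_{\pone,\ptwo}^{+}$ of $f_{\pone,\ptwo}$ satisfies $t_{\pone,\ptwo}^{+}<t_{\pone,\ptwo}^{\ast}$ and $f_{\pone,\ptwo}$ is strictly concave on $(t_{\pone,\ptwo}^{+},\infty)$; since $t_{\pone,\ptwo}^{\ast}$ is not an inflection abscissa we obtain $f_{\pone,\ptwo}''(t_{\pone,\ptwo}^{\ast})<0$ and again $f_{\pone,\ptwo}<0$ just left of $t_{\pone,\ptwo}^{\ast}$. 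Thus $G(\pone)=\emptyset$ for $\pone\geq p^{+}$ and $G(\pone)\subset(\beta(\pone),(\pone,1-\pone)^{-})$ for $\pone<p^{+}$, while \cref{thm-sufficientunimodality} yields the opposite inclusion $[\delta(\pone),(\pone,1-\pone)^{-})\subset G(\pone)$.

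The heart of the proof — and, I expect, the main obstacle — is to show that $G(\pone)$ is upward closed, for which I would use \cref{lemma-Dgzero}. Suppose $\ptwo\in G(\pone)$ and $\ptwo<\ptwo'<(\pone,1-\pone)^{-}$, and pick any $t'\in(0,t_{\pone,\ptwo'}^{\ast})$. As $t_{\pone,\cdot}^{\ast}$ is strictly decreasing, $t'<t_{\pone,\ptwo'}^{\ast}<t_{\pone,\ptwo}^{\ast}$, so $f_{\pone,\ptwo}(t')>0$. Applying \cref{lemma-Dgzero} at this $t'$, I split into the three exhaustive cases. If $\pone<p_{t'}$, then $f_{\pone,\ptwo'}(t')>0$ directly. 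If $p_{t'}\leq\pone<u(t')$, then $f_{\pone,q}(t')>0$ holds exactly for $r_{t'}(\pone)<q<s_{t'}(\pone)$; from $f_{\pone,\ptwo}(t')>0$ we read off $r_{t'}(\pone)<\ptwo<\ptwo'$, and from $t'<t_{\pone,\ptwo'}^{\ast}$ together with the identity $t_{\pone,s_{t'}(\pone)}^{\ast}=t'$ and the monotonicity of $t_{\pone,\cdot}^{\ast}$ we get $\ptwo'<s_{t'}(\pone)$, so $f_{\pone,\ptwo'}(t')>0$. Finally, $\pone\geq u(t')$ cannot occur: by the fact recorded just before the proposition (that $\beta(\pone)$ exceeds $s_{t}(\pone)$ when $\pone\geq u(t)$) it would force $\ptwo'<s_{t'}(\pone)\leq\beta(\pone)$, contradicting $\ptwo'>\ptwo>\beta(\pone)$ from the previous paragraph. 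Hence $\ptwo'\in G(\pone)$, and $G(\pone)$ is an upward ray.

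To finish, for $\pone<p^{+}$ I put $\gamma(\pone):=\inf G(\pone)$, so $\beta(\pone)\leq\gamma(\pone)\leq\delta(\pone)\leq p^{-}$ by the two previous paragraphs; it remains to check $\gamma(\pone)\notin G(\pone)$, i.e.\ that $G(\pone)$ is open on the left. If $\ptwo_{0}\in G(\pone)$, then $\ptwo_{0}>\beta(\pone)$, so $f_{\pone,\ptwo_{0}}^{(3)}(0)>0$ and $f_{\pone,\ptwo_{0}}'(t_{\pone,\ptwo_{0}}^{\ast})<0$; a routine continuity argument — controlling the sign of $f_{\pone,\ptwo}$ near $0$ by the sign of $f_{\pone,\ptwo}^{(3)}(0)$, near $t_{\pone,\ptwo}^{\ast}$ by the sign of $f_{\pone,\ptwo}'(t_{\pone,\ptwo}^{\ast})$, and on the intermediate compact subinterval by uniform convergence, all as $\ptwo\to\ptwo_{0}$ with $t_{\pone,\ptwo}^{\ast}$ varying continuously — shows that $f_{\pone,\ptwo}>0$ on $(0,t_{\pone,\ptwo}^{\ast})$ for $\ptwo$ slightly below $\ptwo_{0}$, so $\ptwo_{0}>\inf G(\pone)$. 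Hence $G(\pone)=(\gamma(\pone),(\pone,1-\pone)^{-})$; setting in addition $\gamma(\pone):=(\pone,1-\pone)^{-}$ for $\pone\geq p^{+}$ and using $0\leq\beta(\pone)\leq\gamma(\pone)\leq\delta(\pone)\leq p^{-}$ produces a function $\gamma:[0,p^{+}]\to[0,p^{-}]$ with the asserted property. One may afterwards identify the graph of $\gamma$ on $(0,p^{+})$ as the locus where $f_{\pone,\gamma(\pone)}$ and $\partial_{t}f_{\pone,\gamma(\pone)}$ vanish at a common point of $(0,t_{\pone,\gamma(\pone)}^{\ast})$ and convert this tangency condition into a first-order differential equation for $\gamma$, but that refinement is not needed for the statement.
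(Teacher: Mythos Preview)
Your proposal is correct and follows essentially the same approach as the paper: both rely on \cref{lemma-Dgzero} to propagate positivity of $f_{\pone,\ptwo}(t)$ to larger values of $\ptwo$, with the identical three-case split on the position of $\pone$ relative to $p_t$ and $u(t)$, and both invoke the fact recorded just before the proposition to exclude the case $\pone\geq u(t)$. The only difference is organisational: the paper first locates $\gamma(\pone)$ as the smallest $\ptwo$ at which the local minimum of $f_{\pone,\ptwo}$ on $\left(0,t^{\ast}_{\pone,\ptwo}\right)$ reaches height zero (via \cref{lemma-boundarycase} and the intermediate value theorem) and then proves positivity for $\ptwo>\gamma(\pone)$, whereas you prove upward-closedness of $G(\pone)$ directly, set $\gamma(\pone)=\inf G(\pone)$, and handle the boundary $\ptwo=\gamma(\pone)$ by a separate continuity argument.
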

\begin{proof}
We will show that for every $\pone\in\left(0,p^+\right)$ there exists a positive number $\gamma(\pone)\in\left[\beta(\pone),p^-\right)$ such that the function $f_{\pone,\ptwo}(t)$ is positive on the interval $\left(0,t_{\pone,\ptwo}^{\ast}\right)$ for all $\ptwo\in\left(\gamma(\pone),(\pone,1-\pone)^-\right)$.

Let $\pone<p^+$ be given. It follows from \cref{prop-step2new} that for $\ptwo<\beta(\pone)$, the function $f_{\pone,\ptwo}$ is not positive on $\left(0,t_{\pone,\ptwo}^{\ast}\right)$ because it vanishes at $t=t_{\pone,\ptwo}^*$ with positive derivative. It thus follows from the convexity properties of $f_{\pone,\ptwo}$ (\cref{lemma-inflectionpoints}) that for $\ptwo<\beta(\pone)$ the graph of $f_{\pone,\ptwo}$ looks as in \cref{fig-seq-ab}; in particular, it has a local minimum with abscissa in the interval $\left(t_{\pone,\ptwo}^{\ast}/2,t_{\pone,\ptwo}^{\ast}\right)$ and negative ordinate. As $\ptwo$ increases, first to $\beta(\pone)$ and then beyond, the derivative of $f_{\pone,\ptwo}$ at $t_{\pone,\ptwo}^*$ changes sign, as depicted in \cref{fig-seq-b,fig-seq-bc}. Increasing $\ptwo$ further, it follows from \cref{lemma-boundarycase} and the intermediate value theorem that there must exist a smallest $\gamma(\pone)$ in the interval $\left[\beta(\pone),(\pone,1-\pone)^-\right)$ such that $f_{\pone,\gamma(\pone)}$ has its local minimum at $\langle\hat t(\pone),0\rangle$ for some $\hat t(\pone)\in\left(0,t_{\pone,\gamma(\pone)}^{\ast}\right)$. The graph of $f_{\pone,\gamma(\pone)}$ is depicted in \cref{fig-seq-c}.

In the following we will show that $f_{\pone,\ptwo}(t)$ is positive on $\left(0,t_{\pone,\ptwo}^{\ast}\right)$ for all $\ptwo\in\left(\gamma(\pone),\pone\right)$. This is equivalent to showing that, for all positive $t$, the value $f_{\pone,\ptwo}(t)$ is positive for all points $\langle\pone,\ptwo\rangle\in\Delta$ such that $\ptwo>\gamma(\pone)$ and $t_{\pone,\ptwo}^{\ast}>t$. If $\pone<p_t$ this is true because \cref{lemma-Dgzero}, \labelcref{lemma-Dgzero-1} shows that $f_{\pone,\ptwo}$ is positive for all $\ptwo<\pone$. If  $p_t\leq\pone<u(t)$, we observe that, by \cref{eq-stpone}, the condition $t_{\pone,\ptwo}^{\ast}>t$ is equivalent to the condition $\ptwo<s_t(\pone)$ and that \cref{lemma-Dgzero}, \labelcref{lemma-Dgzero-2} shows that $f_{\pone,\ptwo}$ is positive for all $\ptwo\in\left(r_t(\pone),s_t(\pone)\right)$. It thus remains to show that $r_t(\pone)\leq\gamma(\pone)$, which follows from the fact that $f_{\pone,\gamma(\pone)}(t)$ is positive. Finally, if $\pone\geq u(t)$, then $\gamma(\pone)\geq\beta(\pone)\geq s_t(\pone)$, and there is thus nothing to prove.
\end{proof}

In the proof of \cref{prop-step3new}, the boundary $\gamma$ has been defined only implicitly. The argument showed, however, that the set $C$, which for $\ptwo<(\pone,1-\pone)^-$ is characterised by the condition $\ptwo>\gamma(\pone)$ and for other parts of $[0,1]^2$ by symmetry, is a non-empty subset of $B$. The inclusion $D\subset C$ follows from \cref{thm-sufficientunimodality} and the fact that \cref{prop-step3new} is an if-and-only-if statement. The proof also showed that the triple $\langle\pone,\gamma(\pone),\hat t(\pone)\rangle$ satisfies the equations
\begin{equation}
\label{eq-secondordercondition}
f_{\pone,\gamma(\pone)}\left(\hat t(\pone)\right)=f_{\pone,\gamma(\pone)}'\left(\hat t(\pone)\right)=0,\quad t_{\pone,\gamma(\pone)}^{\ast}/2<\gamma(\pone)<t_{\pone,\gamma(\pone)}^{\ast},
\end{equation}
and this can be used to solve for the boundary curve numerically. Alternatively, one might differentiate these equations implicitly and obtain a fairly complex system of differential equations for the functions $\gamma(\cdot)$, $\hat t(\cdot)$ which can be integrated numerically.

Finally, we can give a proof of our main result.
\begin{proof}[Proof of \cref{thm-unimodalitynew}]
For $\langle\pone,\ptwo\rangle\in\Delta$ parts \labelcref{thm-unimodalitynew1,thm-unimodalitynew2,thm-unimodalitynew3} are proved in \cref{prop-step1new,prop-step2new,prop-step3new}, respectively. The boundary case $\ptwo=(\pone,1-\pone)^-$ is treated in \cref{lemma-boundarycase}. For other values of $\pone$, $\ptwo$ the claim follows by symmetry.
\end{proof}

% \bibliographystyle{plainnat}
% \bibliography{./../../literature/KSI_lit}

\end{document}